\newtheorem{theorem}{Theorem}[section]
\newtheorem{definition}[theorem]{Definition}
\newtheorem{lemma}[theorem]{Lemma}
\newtheorem{remark}[theorem]{Remark}
\newtheorem{proof}[theorem]{Proof}
\def\RR{\mathbb{R}}
\begin{document}


\title{Well posedness and stationary solutions of a neural field equation with synaptic plasticity}
\author[1]{Juan Cordero Ceballos \thanks{ jccorderoc@unal.edu.co}}
\author[2]{Alejandro Jim\'enez Rodr\'iguez\thanks{ajimenezrodriguez1@sheffield.ac.uk}}

\affil[1]{Departamento de Matem\'aticas y Estad\'istica, Universidad Nacional de Colombia-Sede Manizales, Colombia.}
\affil[2]{Department of Psychology, University of Sheffield, UK}



\maketitle

\begin{abstract}
We consider the initial value problem associated to the neural field equation of Amari type with plasticity
\[
 u_t(x,t)=-u(x,t)+\int_{\Omega}w(x,y)[1+\gamma g( u(x,t) - u(y,t) )] f(u(y,t))\; dy, \;(x,t) \in \Omega \times (0, \infty),
\]
where $\Omega\subset\mathbb{R}^m$, $f$ and $g$ are bounded and continuously differentiable functions with bounded derivative, and $\gamma\ge0$ is the plasticity synaptic coefficient. We show that the problem is well posed in $C_b(\mathbb{R}^m)$ and $L^1(\Omega)$ with $\Omega$ compact. The proof follows from a classical fixed point argument when we consider the equation's flow. Strong convergence of solutions in the no plasticity limit ($\gamma\to0$) to solutions of Amari's equation is analysed. Finally, we prove existence of stationary solutions in a general way. As a particular case, we show that the Amari's model, after learning, leads to the stationary Schr\"odinger equation for a type of gain modulation.   

\end{abstract}



\section{Introduction}
\label{intro}

Neural fields equations are models of the activity of spatially structured populations of neurons in the continuum limit  \cite{coombes2014neural}. In these models, the population of neurons is represented by a spatial domain $\Omega \subset \mathbb{R}^m$ and the activity at position $x \in \Omega$ corresponds to a time varying scalar field $u(x,t)$. The dynamics of the population is described, in general, by an integrodifferential equation. Multiple interacting populations are thus described by coupled systems of equations. The ultimate form of the equation that describes the dynamics of the field depends on the kind of phenomena being modeled, which range from single neuron properties (e.g. refractory periods \cite{wilson1972excitatory}, membrane dynamics \cite{coombes2014neural}, adaptation\cite{coombes2005bumps}) to synaptic or population features (e.g. transmission delays \cite{kao2016absolute}, plasticity\cite{abbott2000synaptic, abbassian2012neural, fotouhi2015continuous}, inhibitory/excitatory connectivity\cite{wilson1972excitatory})\\

Neural fields have been used to model EEG rhythms, epilepsy, binoculary rivalry and a broad range of cognitive phenomena\cite{coombes2014neural, bressloff2011spatiotemporal}. A recent account of the research and challenges in the area can be found in \cite{coombes2014neural}. Two commonly used neural fields models are the Wilson and Cowan  model \cite{wilson1972excitatory} and the Amari neural field equation \cite{amari1977dynamics}. In the Wilson and Cowan model, the field variable corresponds to the proportion of firing neurons. This model includes refractory periods and nonlocal interations which are usually removed  in modern mean field theories.  The Amari model can be voltage based (i.e. the field variable represents the mebrane potential) or rate based (i.e. the field variable represents the firing rate); this model can be seen as a special case of the  Wilson and Cowan one in which the refractory period is removed but nonlocal interactions are preserved.\\

In this work we focus in a voltage based neural field model of Amari type with plasticity. In the traditional voltage based single population Amari model\cite{amari1977dynamics}, the dynamics of the membrane voltage $u(x,t)$ is described by the following nonlinear integrodifferential equation of Hammerstein type

\begin{align}\label{E:Amari}
 &u_t(x,t)+u(x,t)=\int_{\mathbb{R}^m}\omega(x,y)f(u(y,t))dy.
\end{align}

The left hand part of the equation represents a first order membrane dynamics (exponential decay of incoming pulses). The right hand side represents the synaptic inputs from other neurons in the population. The kernel of the intergral $w(x,y)$ is interpreted as the weight or strength between the \emph{pre-synaptic} neuron at position $y$ and the \emph{post-synaptic} one at position $x$, and it is called \emph{synaptic kernel}. The sign of the kernel determines whether the synapse is excitatory or inhnibitory. Commonly used kernels are, the purely excitatory $w(\delta)=\exp(-|\delta|)/2$ and the so called mexican hat $w(\delta ) = (1 - |\delta |)\exp(-|\delta|)$, which represents long range inhibitiory - short range excitatiory connections\cite{ermentrout1998neural, coombes2014neural}; it is usually assumed $w(x,y) = w(|x-y|)$ (i.e. isotropic connectivity). $f(u(x,t))$ stands for the firing rate of the neuron as a function of the membrane voltage; it can be understood as the probability of neuron $x$ firing an action potential at time $t$ . Commonly used firing rate functions are sigmoidal functions like $f(\delta) = 1/(1 + exp(-\delta))$ or $f(\delta) = \arctan(\delta)$, or linear/piece-wise linear functions\cite{ bressloff2011spatiotemporal,ermentrout1998neural}.\\

Plasticity is introduced here by modifying the kernel in an Hebbian, activity dependent manner (i.e, neurons that fire together, increase their connection strength)\cite{gerstner2002mathematical, abbassian2012neural, fotouhi2015continuous}; that is, the term plasticity is understood in a broad sense that include many synaptic mechanisms (e.g. facilitation, potentiation, depression, etc.). In this paper we study the particular dynamic kernel model of Abbasian et al. \cite{abbassian2012neural}
\begin{equation}\label{E:Abbassian}
u_t(x,t)+u(x,t)=\int_{-\infty}^\infty\omega_{g,\gamma}(x,y)f(u(y,t))dy,
\end{equation}
with
\begin{equation}\label{E:Abbassian2}
\omega_{g,\gamma}(x,y) = \omega(x,y)[1 + \gamma g( u(x,t) - u(y,t) )],
\end{equation} 
where $g$, called the \emph{plasticity or learning kernel}, is the gaussian fuction $g(\delta) = e^{-\delta^2}$ and $\gamma\ge0$ which  we will refer to as the \emph{synaptic plasticity coefficient}. It is important to note that, in this model, $u(x,t)$ is the membrane potential as previously stated and, therefore, the voltage and not the firing rate is used to modify the synaptic strength.\\

The parameter $\gamma$ represents the increase in the synapstic strength when the pre and post-synaptic neurons fire together at the same rate ($g(u(x,t) - u(y,t)) \approx g(0) = 1$). This coefficient is related to the learning rate in discrete neural models\cite{gerstner2002mathematical}. When the membrane potentials $u(x,t)$ and $u(y,t)$ are similar, the connection of the neurons will be strengthened; when different, the activity is assumed to be uncorrelated and the behavior approaches that of the Amari's model ($g(\delta) \to 0$ when $\delta \to \infty$). Finally, when both neurons are silent, the functional form of the firing rate function have to be choosen in order avoid any unexpected behavior due to false increases in the synaptic strength.\\

Due to the particular form of the plasticity kernel, after the learning has ocurred (i.e. in the stationary case, $t \to \infty$), it can be divided in a pre-synaptic and post-synaptic contribution $g(u_\infty(x) - u_\infty(y)) = \phi_{\mathrm{post}}(x)\phi_{\mathrm{pre}}(y)$; as seen in section \ref{gainfield}. The pre-synaptic contribution defines a \emph{multiplicative gain field}, that is, a multiplicative modulation in the slope of the firing rate function in a position dependent way. Gain modulation is an important phenomenon related with neural computations \cite{chance2002gain, salinas2000gain}. It has been proposed to play a major role in attention, propagation of epileptic seizures and the stability of the underlying networks\cite{chance2011gain, stead2010microseizures}. Gain modulation emerges here as a by-product of learning, therefore, the function $\phi_{\mathrm{pre}}(y)$ represents the increase in the neurotransmiter release probability and  $\phi_{\mathrm{post}}(x)$ represents the increase in receptor density in the post-synaptic neuron (among other membrane and internal changes in the cell); both observed products of learning in biological neurons \cite{abbott2000synaptic}\\

To study the effects of the learned patterns on the dynamics of neural fields, the learning is deactivated but the stationary kernel is preserved and decomposed as previously described. The new kernel can be used in a second neural field equation in which the learned structured does not change any more but is fixed to the stationary solution of \eqref{E:Abbassian}
\begin{equation}
v_t(x,t)+v(x,t)=\frac{1}{2\lambda}\int_{\Omega}w(x,y)G(u_\infty(x) - u_\infty(y))f(v(y,t))dy.
\label{eq:amarigain2}
\end{equation}
We introduce a particular form of this equation in connection to the plasticity model being studied
\begin{equation}
v_t(x,t)+v(x,t)=\int_{\Omega}e^{-\lambda|x-y|}\phi_{\mathrm{pre}}(y)v(y,t)dy
\label{eq:amarigain}
\end{equation}
with $\omega(x)=\frac{1}{\lambda}e^{-\lambda|x|}$ and $\lambda>0$. In section \ref{gainfield} we study some of its properties and those results are extended in an upcomming paper \cite{Jimenez2017}.\\

Most of the theoretical research on neural fields have been focused in studying the properties of particular soutions, like resting state, bumps or traveling waves \cite{coombes2005bumps,coombes2014neural, bressloff2011spatiotemporal}. The work of Abbassian et al. is also oriented in that direction \cite{abbassian2012neural}. To our knowledge, the first rigorous approach to the existence of general solutions of the Amari model \eqref{E:Amari} was done by Potthast and Graben \cite{potthast2010existence}. Similar and further results can be found in \cite{faugeras2008absolute, oleynik2013properties}. Further analysis, like asymptotic behavior in bounded and unbounded domains can be found in \cite{da2012properties, da2014asymptotic}. For a smooth firing rate $f$, a well defined synaptic kernel $\omega$ and initial datum $u(x,0)=u_0(x), u_0\in C_b(\mathbb{R}^n)$ (the space of continous bounded functions),  in \cite{potthast2010existence} they showed that there exist a function  $v\in C([0,+\infty):C_b(\mathbb{R}^n))$, solution of \eqref{E:Amari} in $\mathbb{R}^n\times(0,+\infty)$ and  $v(x,0)=u_0(x)$. This is a global result for the Cauchy probleam associated to \eqref{E:Amari}. If $f$ is discontinous (e.g. a Heaviside function), the showed that this method is not valid. Then, with another kind of restrictions on the kernel and more specialized spaces, they obtained weaker solutions usign a compactness argument.\\

In this work we study the Initial Value Problem (IVP) associated with  \eqref{E:Abbassian}
\begin{align}
&u_t(x,t)+u(x,t)=\int_{\Omega}\omega_{g,\gamma}(x,y)f(u(y,t))dy,\;t>0,x\in\Omega,\label{E:PVI1}\\
&u(x,0)=u_0(x),\;x\in\Omega\subset\mathbb{R}^m,\label{E:PVI2} 
\end{align}
with $m\ge 1$. We follow the ideas of Potthast and Graben \cite{potthast2010existence} to show that the IVP \eqref{E:PVI1}-\eqref{E:PVI2} is globally well posed in $C_b(\mathbb{R}^n)$ and $L^1(\Omega)$ for $\Omega$ bounded. We also show that the solutions fot he Amari model  \eqref{E:Amari} are a good approximation for the model with plasticity of Abbasian et al. (an assumption used for those author in proving their results). More precisely, we show that solutions of \eqref{E:PVI1} converge uniformly to solutions of  \eqref{E:Amari} when $\gamma\to0$. In the last section we improve the results of \cite{abbassian2012neural} by showing the existence of stationary solutions of \eqref{E:PVI1} for small $\gamma$ and $\Omega\subset\mathbb{R}^m$ compact. Finally, the effect of learned gain fields is studied by introducing the Gain Field Equation \eqref{eq:amarigain} and we show that, for certain gain fields, it corresponds to the stationary Schr\"odinger equation. 

\section{Preliminary}
We begin this section defining the concept of well posedness of a Initial Value Problem (IVP).\\

Consider the following Cauchy problem:

\begin{equation}
\label{eq:cauchy}
\left\{
\begin{array}{l l}
u_t(t)&=F(t,u(t)) \in X,\\
u(0)&=u_0 \in Y,
\end{array} \right.
\end{equation}
where $X$, $Y$ are Banach spaces, $T_0 \in (0,\infty)$ and let $F:[0,T_0]  \times Y \to X$ is a continous function and $u_0$ is a given data.

\begin{definition}[Well posedness]
Let $X$, $Y$ be Banach spaces, $T_0 \in (0,\infty)$ and let $F:[0,T_0]  \times Y \rightarrow X$ be a continous function. We say that the Cauchy problem \eqref{eq:cauchy} is \textbf{locally well-posed in} $Y$ if:

 \begin{enumerate}
  \item There exist $T \in (0,T_0]$ and a function $u \in C([0,T];Y)$ such that $u(0) = u_0$ and the differential equation is satisfied,
  \item the problem (\ref{eq:cauchy}) has at most one solution in $ C([0,T];Y)$,
  \item the map $u_0 \longmapsto u$ is continuous.
\end{enumerate}
If the above conditions are satisfied for all $T \in [0,\infty)$, we say that the Cauchy problem is \textbf{globally well-posed} in $Y$.
\end{definition}

Next we present the appropriate spaces for the well posedness results, and we define the kernel considered by Potthas and Graben\cite{potthast2010existence}, that significantly extend the others one.
\begin{definition}
If  $M$ is a metric space and $I\subset\mathbb{R}$ a interval then
\begin{itemize}
\item[i)]  $C_{b}(M):=\{h:M\longrightarrow\RR\;|\;\text{$h$ is continuous and bounded}\}$, endowed with the norm 
\[
\|h\|:=\sup_{x\in{M}}|h(x)|.
\]
\item [ii)] $C^1_{b}(I):=\{g:I\longrightarrow\RR\;|\; g\in C^1(I)\,\text{and}\; g'\in C_b(I)\}$, with the norm 
\[
\|g\|:=\sup_{t\in{I}}(|g(t)|+|g'(t)|).
\]
\item [iii)] $C^{0,1}_{b}(\mathbb{R}^m\times[0,\infty)):=\{f:\mathbb{R}^m\times[0,\infty)\longrightarrow\RR\;|\;\forall t>0\;f(\cdot, t)\in C_b(\mathbb{R}^m),\; \forall x\in\mathbb{R}^m\; f(x,\cdot)\in C^1_b[0,\infty)\}$, and norm 
\[
\|f\|:=\sup_{x\in\mathbb{R}^m, t\ge0}\Big(|f(x,t)|+\Big\vert\frac{df}{dt}(x,t)\Big\vert\Big).
\]
\end{itemize}
\end{definition}

\begin{definition}{(Synaptic kernel)}
The synaptic integral kernel is a function $w:\RR^{m}\times\RR^{m}\to\mathbb{R}$ such that 
\begin{equation}
w \in L^{\infty}(\RR^{m}\times\RR^{m}),\quad\|w\|_{L^ {\infty}}=\sup_{x,y\in\RR^{m}}|w(x,y)|\le{C_{\infty}},\label{e9}
\end{equation}
and
\begin{equation}
 \|w\|_{L^{\infty}_xL^{1}_y}:=\sup_{x\in\RR^{m}}\|w(x,\cdot)\|_{L^{1}_y}\le{C_w},  \label{e8}
\end{equation}
that is
\begin{equation}
 w\in{L^{\infty}_xL^{1}_y(\RR^{m}\times\RR^{m})},\label{e7}
\end{equation}
\\
as a function $x\mapsto w(x,\cdot)$ of $L^{\infty}_x(\mathbb{R}^m)$ with $w(x,\cdot)\in L^1_y(\mathbb{R}^m)$  for all $x$, and for some constants $C_{\infty}, C_w>0$.\\
Moreover, $w$ satisfy the Lipschitz condition
\begin{equation}
 \|w(x,\cdot)-w(\widetilde{x},\cdot)\|_{_{L^{1}}}\le{K_w}|x-\widetilde{x}|\;\;\;\;\;x,\;\widetilde{x}\in\RR^{m}\label{e9}
\end{equation}
for some constant $K_w>0$.
\end{definition}

\begin{definition}
\label{def:learning}
 The activation function $f:\mathbb{R}\to\mathbb{R}$ and the modulation learning function $g:\mathbb{R}\to\mathbb{R}$ are those such that
 \begin{equation}
  f,g\in C_b^1(\mathbb{R}), 0\le f,g\le 1.
 \end{equation}
\end{definition}

The equation \eqref{E:Abbassian} suggest the use of the following operators:
\begin{align}
\label{eq:op1}
&(Ju)(x,t):= \int_{\Omega}{\omega_{g,\gamma}(x,y)f(u(y,t))\;dy},
\end{align}
\begin{align}
\label{eq:op2}
&(Fu)(x,t) := -u(x,t) + (Ju)(x,t),
\end{align}
and
\begin{align}
\label{eq:op3}
&(Au)(x,t) := \int_0^t{(Fu)(x,s) ds}.
\end{align}
Note that $J$ is the nonlinear part of equation \eqref{E:Abbassian} and we can write the IVP \eqref{E:PVI1}-\eqref{E:PVI2} as:
\begin{equation}
\label{eq:opkerdyn}
\left\{ 
  \begin{array}{l l}
    u' = Fu,\\
    u(x,0) = u_0(x),
  \end{array} \right.
\end{equation}
where $u' \equiv \frac{\partial u}{\partial t}$ and, by integration 
\begin{equation}
\label{eq:nfvolterra}
u = u_0 + Au.
\end{equation}

It is clear that a solution of \eqref{eq:nfvolterra} is a fixed point of the operator defined by $\varphi\to u_0+A\varphi$.

\begin{definition}
A solution of the IVP \eqref{eq:opkerdyn} or \eqref{eq:nfvolterra} is a function $u \in C(\RR^m \times [0,\rho))$, for some $\rho$, such that $u$ satisfies \eqref{eq:opkerdyn} or \eqref{eq:nfvolterra}. The flow of the equation $u'=Fu$ is the map 
\[
 (t,u_0)\to \varphi(t,u_0)
\]
defined as
\begin{equation}
 \varphi (t,u_0)=u_0+\int_0^t{(F\varphi(\cdot,u_0))(\cdot_{x},s)} ds, 
\end{equation}
where
\begin{equation}
 \varphi(\cdot,u_0)(x,s):=\varphi(s,u_0)(x).\\
\end{equation}
\end{definition}

Now, we have the following preliminary estimatives.

\begin{lemma}\label{le:1}
If $\omega$ is a synaptic kernel and $f,g$ are activation and modulation learning functions respectively, then the operator $J$ is such that 
\begin{equation}
\|Ju\|_{L^{\infty}_xL^{\infty}_t}\le (1 + \gamma)C_w.
\end{equation}
\end{lemma}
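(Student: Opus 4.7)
The plan is to bound the integrand pointwise using the hypotheses on $f$, $g$, $\gamma$, and $w$, then take the supremum over $x$ and $t$. This is essentially a direct estimate — no fixed-point or compactness machinery is needed.

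First I would expand the definition of $\omega_{g,\gamma}$ inside $(Ju)(x,t)$ to write
\[
|(Ju)(x,t)| \;\le\; \int_{\Omega} |w(x,y)|\,\bigl|1 + \gamma\, g(u(x,t)-u(y,t))\bigr|\,|f(u(y,t))|\, dy.
\]
Then I would use Definition \ref{def:learning}, which gives $0 \le f \le 1$ and $0 \le g \le 1$, together with $\gamma \ge 0$, to conclude that the factor $1 + \gamma g(u(x,t)-u(y,t))$ is nonnegative and bounded above by $1+\gamma$, while $f(u(y,t)) \le 1$. This collapses the integrand to
\[
|(Ju)(x,t)| \;\le\; (1+\gamma)\int_{\Omega} |w(x,y)|\, dy.
\]

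Next I would invoke the synaptic kernel estimate \eqref{e8}, namely $\|w(x,\cdot)\|_{L^1_y} \le C_w$ uniformly in $x$, to obtain
\[
|(Ju)(x,t)| \;\le\; (1+\gamma)\,C_w \qquad \text{for every } (x,t) \in \Omega \times [0,\infty).
\]
Taking the supremum in $x$ and $t$ yields $\|Ju\|_{L^\infty_x L^\infty_t} \le (1+\gamma)C_w$, as claimed.

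There is no real obstacle here; the only thing to be careful about is that the bound on $g$ combined with $\gamma \ge 0$ makes the bracket positive, so absolute values pass through cleanly, and that the $L^\infty_x L^1_y$ hypothesis on $w$ (rather than any pointwise bound) is exactly what is needed after the pointwise bounds on $f$ and $g$ are used.
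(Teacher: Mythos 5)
Your proposal is correct and follows essentially the same route as the paper's proof: a direct pointwise estimate using $0\le f,g\le 1$ and the uniform bound $\|w(x,\cdot)\|_{L^1_y}\le C_w$ from \eqref{e8}. The only cosmetic difference is that you bound the factor $1+\gamma g$ by $1+\gamma$ before integrating, whereas the paper splits $Ju$ into the two summands and estimates each separately; the ingredients and the resulting bound are identical.
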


\begin{proof}
\begin{align}
|(Ju)(x,t)| &= \left |\int_{\mathbb{R}^m}{w(x,y)f(u(y,t)) \; dy} + \gamma \int_{\mathbb{R}^m}{w(x,y)g(u(x,t) - u(y,t)) f(u(y,t))}\; dy\right|\\
&\le C_w + \gamma\int_{\mathbb{R}^m}{|w(x,y)g(u(x,t) - u(y,t))| \; dy}\\
&\le C_w + \gamma\int_{\mathbb{R}^m}{|w(x,y)| \; dy}\\
&\le (1 + \gamma)C_w.
\end{align}
\end{proof}

\begin{lemma}[Global estimates]
\label{global}
Let $u_0 \in C_b(\RR^m)$ and $u$ be a solution to (\ref{eq:opkerdyn}). Then there is $C > 0$ such that
\begin{align}
	&|u(x,t)| \le C,& \mbox{for all } (x,t) \in \RR^m \times [0,\infty),
\end{align}
 with $C = \max\{\Vert u_0 \Vert_{\infty}, |(1+\gamma)C_w|\}$.
\end{lemma}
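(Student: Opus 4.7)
The plan is to exploit the fact that, after integrating the equation along time, the pointwise evolution $t\mapsto u(x,t)$ satisfies a one-dimensional linear ODE with a forcing term that is uniformly controlled by Lemma~\ref{le:1}. The bound then falls out of Duhamel's formula together with a convex-combination observation.

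First, I would fix $x\in\mathbb{R}^m$ and rewrite the integral equation \eqref{eq:nfvolterra} as
\[
u(x,t) = u_0(x) + \int_0^t \bigl(-u(x,s) + (Ju)(x,s)\bigr)\, ds.
\]
Since $u \in C(\mathbb{R}^m\times[0,\infty))$ and $s\mapsto (Ju)(x,s)$ is continuous, the right-hand side is continuously differentiable in $t$, so $u(x,\cdot)$ solves the scalar linear ODE $y'(t)+y(t)= (Ju)(x,t)$ with $y(0)=u_0(x)$. The variation-of-constants formula gives
\[
u(x,t) = e^{-t}u_0(x) + \int_0^t e^{-(t-s)}(Ju)(x,s)\, ds.
\]

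Next, I would invoke Lemma~\ref{le:1} to bound $|(Ju)(x,s)| \le (1+\gamma)C_w$ uniformly in $(x,s)$. Taking absolute values and estimating term by term,
\[
|u(x,t)| \le e^{-t}\|u_0\|_\infty + (1+\gamma)C_w\int_0^t e^{-(t-s)}\, ds
= e^{-t}\|u_0\|_\infty + (1-e^{-t})(1+\gamma)C_w.
\]
Since $e^{-t}$ and $1-e^{-t}$ are nonnegative weights summing to $1$, the right-hand side is a convex combination of $\|u_0\|_\infty$ and $(1+\gamma)C_w$, hence bounded by their maximum. This gives exactly the stated constant $C=\max\{\|u_0\|_\infty,(1+\gamma)C_w\}$, uniformly in $(x,t)$.

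There is essentially no hard step here: the argument is a standard Duhamel/Gronwall-style estimate. The only mild subtlety is justifying differentiability in $t$ from the $C(\mathbb{R}^m\times[0,\infty))$ regularity assumed in the definition of solution, but this follows immediately from the fact that $u$ satisfies the Volterra form \eqref{eq:nfvolterra} with a continuous integrand. The convex combination observation is what makes the bound sharp and independent of $t$; an alternative route via Gronwall's inequality would yield the same constant but is more cumbersome.
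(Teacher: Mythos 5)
Your proof is correct, but it takes a different route from the paper's. The paper does not use Duhamel's formula here; instead it derives the two differential inequalities $u' \le -u + c$ and $u' \ge -u - c$ with $c = (1+\gamma)C_w$ (obtained from Lemma~\ref{le:1}), solves the comparison ODE $y' = -by \pm c$, $y(0)=a$ explicitly, observes that the solution is monotone and trapped between $a$ and $c/b$, and concludes $|u| \le \max\{|a|,|c/b|\}$ by substituting $a = u_0$, $b=1$. Your argument reaches the same constant by writing the variation-of-constants representation
\[
u(x,t) = e^{-t}u_0(x) + \int_0^t e^{-(t-s)}(Ju)(x,s)\,ds
\]
and noting that the resulting bound $e^{-t}\|u_0\|_\infty + (1-e^{-t})(1+\gamma)C_w$ is a convex combination. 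This is arguably the cleaner of the two: it is a direct computation that sidesteps the comparison principle the paper uses only implicitly (the paper analyzes the comparison ODE but never formally justifies that a subsolution stays below its solution), and it is consistent with the representation formula the paper itself invokes later (in the $L^1$ boundedness lemma and in Lemma~\ref{L:Stationary}). What the paper's sub/supersolution approach buys is robustness -- it would survive in settings where no explicit solution formula is available -- but in this linear-plus-bounded-forcing situation that generality is not needed. One cosmetic note: the paper's proof writes $c = 1 + \gamma C_w$ at one point, which is evidently a typo for $(1+\gamma)C_w$; your derivation produces the intended constant directly.
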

\begin{proof}
By the lemma \ref{le:1} we have that $-c \le (Ju)(x,t) \le c$ for some constant $c$. Then using \eqref{E:PVI1} we obtain the following differential inequalities:
\begin{align}
&u' \le -bu + c,\\
&u' \ge -bu - c,
\end{align}

with $b = 1$ and $ c= 1 + \gamma C_w$.\\

If we consider the IVP:
\begin{equation}
\label{eq:ode}
\left\{ 
  \begin{array}{l l}
    y' = -by + c,\\
   y(0) = a,
  \end{array} \right.
\end{equation}
the solution is given by:
\begin{align}
y(t) &= \frac{c}{b}\left(1 - e^{-bt}\right) + ae^{-bt}, \qquad t > 0.
\end{align}

If we plug it in  \eqref{eq:ode}, we get
\begin{equation}
y' = b\underbrace{\left(\frac{c}{b} - a\right)}_{p}e^{-bt}, \qquad y(0) = a.
\end{equation}

Note that if $p < 0$, $y$ will be monotone decreasing and, in that case, $c/b\le y \le a$. On the contrary, if $p>0$, $y$  will be monotone increasing and approaching $c/b$, then $a \le y \le c/b$. Therefore
\begin{equation}
y \le \max\{|a|, |c/b|\}.
\end{equation}
Following the same reasoning with
\begin{equation}
\label{eq:ode2}
\left\{ 
  \begin{array}{l l}
    y' = -by - c,\\
   y(0) = a,
  \end{array} \right.
\end{equation}
we get
\begin{equation}
y \ge -\max\{|a|, |c/b|\}.
\end{equation}
Then, changing $y$ by $u$, $a$ by $u_0$, $b$ and $c$ as above, we get the bound which we were looking for:
\begin{equation}
|u(x,t)| \le  \max\{\Vert u_0 \Vert_{\infty}, |(1+\gamma)C_w|\}.
\end{equation}
\end{proof}

Hereafter, the functions $\omega,f\,\text{and}\, g$ are as defined above.
\begin{lemma}
\label{lemmaF}
The operator $F$ is well defined as a map from $C^{0,1}_{b}(\mathbb{R}^m\times[0,\infty))$ on itself. Moreover, If $u(x,\cdot)$ is continous in t, then $(Ju)(x,\cdot)$ is continous in $t$ too.
\end{lemma}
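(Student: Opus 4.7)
The plan is to verify the two properties that define $C^{0,1}_b(\mathbb{R}^m \times [0,\infty))$ for $Fu = -u + Ju$: spatial continuity and boundedness for each fixed $t$, and $C^1_b$-regularity in $t$ for each fixed $x$. Since the identity term $-u$ already lies in $C^{0,1}_b$ by assumption, the entire burden is on the nonlinear operator $J$. Boundedness $\|Ju\|_\infty \le (1+\gamma)C_w$ is already granted by Lemma \ref{le:1}, so I would focus on the continuity and differentiability statements.

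For spatial continuity of $(Ju)(\cdot,t)$, I would bound $|(Ju)(x,t) - (Ju)(\tilde x,t)|$ by splitting and using: (a) the Lipschitz estimate \eqref{e9} on $w(x,\cdot) - w(\tilde x,\cdot)$ in $L^1_y$ to control the part where the kernel varies, and (b) the uniform continuity of $g$ together with the continuity of $u(\cdot,t)$ to control the variation of the $g$-factor. Because $f$ and $g$ are bounded by $1$, every term reduces to either $K_w\|f\|_\infty|x-\tilde x|$ or $C_w \gamma \sup_y |g(u(x,t)-u(y,t)) - g(u(\tilde x,t) - u(y,t))|$, and the latter tends to $0$ as $\tilde x \to x$ since $g \in C^1_b$ and $u(\cdot, t)$ is continuous. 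This gives $(Ju)(\cdot,t) \in C_b(\mathbb{R}^m)$.

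For continuity of $(Ju)(x,\cdot)$ in $t$ under the weaker hypothesis that $u(x,\cdot)$ is merely continuous, I would fix $x$, pick $t_n \to t$, and apply dominated convergence to the integrand
\[
w(x,y)\bigl[1+\gamma g(u(x,t_n)-u(y,t_n))\bigr]f(u(y,t_n)),
\]
which converges pointwise in $y$ by continuity of $u(x,\cdot), u(y,\cdot), f, g$, and is dominated by the integrable function $(1+\gamma)|w(x,y)|$. This yields the last claim of the lemma.

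For differentiability of $(Ju)(x,\cdot)$ with a bounded derivative when $u \in C^{0,1}_b$, I would differentiate under the integral sign. The formal derivative is
\[
w(x,y)\Bigl\{\gamma g'(u(x,t)-u(y,t))(u_t(x,t)-u_t(y,t))f(u(y,t)) + [1+\gamma g(\cdot)]f'(u(y,t))u_t(y,t)\Bigr\},
\]
which is dominated, uniformly in $t$, by a constant multiple of $|w(x,y)|$ since $f,g,f',g'$ are bounded and $u_t$ is bounded. A standard differentiation-under-the-integral theorem then guarantees that $(Ju)(x,\cdot)$ is $C^1$ with derivative equal to the above expression, and the same domination yields a uniform bound $|\partial_t(Ju)(x,t)| \le M$ for some $M$ depending on $\|u_t\|_\infty,\gamma, C_w, \|f'\|_\infty, \|g'\|_\infty$. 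Combining with the $-u$ part gives $Fu \in C^{0,1}_b$.

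The main technical point is the $t$-differentiability step: one must verify the hypotheses of the differentiation-under-the-integral theorem with care, since the integrand depends on $t$ through both $u(x,t)$ and $u(y,t)$ and the domain of integration $\mathbb{R}^m$ is unbounded. Everything else — spatial continuity via \eqref{e9} and $t$-continuity via dominated convergence — is routine once the domination by $(1+\gamma)|w(x,y)|$ is identified.
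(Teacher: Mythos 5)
Your proposal is correct and follows essentially the same route as the paper: the same three-way splitting for spatial continuity using the $L^1_y$-Lipschitz property of $w$, dominated convergence for the $t$-continuity claim, and differentiation under the integral sign (Leibniz rule) with the boundedness of $f'$, $g'$, $u_t$ for the $C^1_b$-in-$t$ part. The only cosmetic difference is that for the third spatial term you invoke the Lipschitz bound on $g$ directly where the paper cites dominated convergence; both close the argument.
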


\begin{proof}
Suppose $u \in C^{0,1}_{b}(\mathbb{R}^m\times[0,\infty))$. We need to look only at the second term of $Fu$, $(Ju)(x,t)$. We already saw in the preceding proof that $|(Ju)(x,t)|\le (1+\gamma)C_w$.\\

Let be $x,\tilde{x} \in \RR^m$. We have
\begin{equation}
	|(Ju)(x,t) - (Ju)(\tilde{x},t)|\le  I_1 + I_2 + I_3,
\end{equation}
where
\begin{equation}
I_1=\int_{\mathbb{R}^m}|w(x,y)-w(\tilde{x},y)||f(u(y,t))|dy\le k_w |x - \tilde{x}|,
\end{equation}
\begin{equation}
I_2=\gamma\int_{\mathbb{R}^m}\left\vert[w(x,y)-w(\tilde{x},y)]g(u(x,t)-u(y,t))\right\vert dy\le \gamma k_w |x - \tilde{x}|
\end{equation}
and
\begin{equation}
I_3=\gamma\int_{\mathbb{R}^m}|w(\tilde{x},y)[g(u(x,t)- u(y,t)) - g(u(\tilde{x},t)- u(y,t))]||f(u(y,t))|dy.
\end{equation}

It is clear that $I_1,I_2\to0$ if $x \rightarrow \tilde{x}$. $I_3\to0$ when $x \rightarrow \tilde{x}$ because of the Lebesgue's Dominated Convergence Theorem. So, we have continuity with respect to $x$.\\

Let's see if $Fu$ is continuously differentiable with respect to $t$. We have this for $u(x,t)$, so we check $Ju$.\\

As $f,g \in C_b^1(\mathbb{R})$ and $u(x,\cdot) \in C_b^1([0,\infty))$, we can apply the Leibniz rule and the chain rule to obtain:
\begin{align}
	\frac{\partial Ju}{\partial t}(x,t) &= \frac{\partial }{\partial t} \left(\int_{\mathbb{R}^m}{w(x,y)[1 + \gamma g(u(x,t) - u(y,t))] f(u(y,t))\;dy}\right)\\
	&=\int_{\mathbb{R}^m}w(x,y)\left\{ \frac{df}{ds}(u(y,t))\cdot\frac{\partial u}{\partial t}(y,t)\right\}dy +\notag\\
	&\mathrel{\phantom{=}} \gamma \int_{\mathbb{R}^m} w(x,y)\left[ g(u(x,t) - u(y,t))\left\{ \frac{df}{ds}(u(y,t))\cdot\frac{\partial u}{\partial t}(y,t)\right\} + \right.\notag\\
	&\mathrel{\phantom{=}}f(u(y,t))\left.\left\{ \frac{dg}{ds}(u(x,t) - u(y,t))\cdot\left(\frac{\partial u}{\partial t}(x,t)-\frac{\partial u}{\partial t}(y,t)\right)\right\}\right]dy.
\end{align} Because $df/ds$, $du/ds$ and $dg/ds$ are continuous and bounded, we get that $\frac{\partial Ju}{\partial t}(x,t)$  is continuous and $\sup_{x,t}|\frac{\partial Ju}{\partial t}(x,t)| < \infty$.
\\

Finally, if $u(x,\cdot)$ is continous in $t$ then $(Ju)(x,\cdot)$ is continous in $t$ by application of the dominated convergence theorem as in the previous steps.
\end{proof}

\begin{lemma}\label{lemavolterra}
 $u$ is a solution of \eqref{eq:opkerdyn} if and only if it is a solution of \eqref{eq:nfvolterra}. 
\end{lemma}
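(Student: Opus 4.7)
The plan is to establish the standard equivalence between a Cauchy problem and its Volterra integral reformulation, relying on the Fundamental Theorem of Calculus. The regularity needed to apply it in both directions has essentially been supplied already by Lemma \ref{lemmaF}, which ensures that $(Fu)(x,\cdot)$ is continuous in $t$ whenever $u$ is, so the main work here is bookkeeping.

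For the forward implication, I would assume $u$ solves \eqref{eq:opkerdyn}, so $u'(x,\cdot) = (Fu)(x,\cdot)$ pointwise in $x$, with $u(x,0)=u_0(x)$. Since $u \in C^{0,1}_b(\mathbb{R}^m\times[0,\rho))$, Lemma \ref{lemmaF} gives that $(Fu)(x,\cdot)$ is continuous in $t$, so integrating the ODE from $0$ to $t$ and using the initial condition yields
\begin{equation*}
u(x,t) = u(x,0) + \int_0^t u_s(x,s)\,ds = u_0(x) + \int_0^t (Fu)(x,s)\,ds = u_0(x) + (Au)(x,t),
\end{equation*}
which is precisely \eqref{eq:nfvolterra}.

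For the backward implication, I would start from $u = u_0 + Au$. Evaluating at $t=0$ immediately gives $u(x,0)=u_0(x)+\int_0^0(Fu)(x,s)\,ds = u_0(x)$, so the initial condition holds. To recover the differential equation, I differentiate both sides with respect to $t$: since $(Fu)(x,\cdot)$ is continuous in $t$ by Lemma \ref{lemmaF}, the Fundamental Theorem of Calculus applies to $(Au)(x,t) = \int_0^t (Fu)(x,s)\,ds$ and yields $\partial_t (Au)(x,t) = (Fu)(x,t)$. Hence $u_t(x,t) = (Fu)(x,t)$, which is \eqref{eq:opkerdyn}.

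There is no real obstacle, since everything hinges on continuity of $Fu$ in $t$, which was already proven. The only point worth being slightly careful about is making sure the regularity class in which solutions are sought makes both directions of FTC available: in the forward direction we need $u_t$ to exist and be continuous on $[0,\rho)$, and in the backward direction we need the integrand $(Fu)(x,\cdot)$ to be continuous. Both are guaranteed by Lemma \ref{lemmaF} in the space $C^{0,1}_b(\mathbb{R}^m\times[0,\infty))$ (or its restriction to $[0,\rho)$), which is the natural setting for the fixed-point argument to follow.
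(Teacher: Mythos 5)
Your proposal is correct and follows essentially the same route as the paper: both directions rest on the Fundamental Theorem of Calculus, with the continuity of $(Fu)(x,\cdot)$ in $t$ supplied by Lemma \ref{lemmaF} justifying the differentiation of the Volterra form and the integration of the differential form. Your version simply spells out the bookkeeping (initial condition at $t=0$, regularity class) a bit more explicitly than the paper does.
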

\begin{proof}
 If $u$ is a solution of \eqref{eq:nfvolterra}, we need to ensure that it is differentiable with respect to time. However, $Fu$ is continous, so $\int_0^t(Fu)(x,s)ds$ is differentiable with respect to $t$. Now we can differentiate \eqref{eq:nfvolterra} and $u$ satisfies \eqref{eq:opkerdyn}.\\
 
 If $u$ is a solution of \eqref{eq:opkerdyn}, we can integrate it to satisfy \eqref{eq:nfvolterra}.
\end{proof}

\begin{remark}
 The above lemmas are another versions of those obtained by Potthas and Grabem in the section 2 \textcolor{red}{[...]} in the case $\gamma=0$ (when the Amary's model works). The proofs follow those same ideas.
\end{remark}

 \section{Well posedness in $C_b(\mathbb{R}^n)$}
For $\rho>0$ given, we write $X_\rho:=C_b(\mathbb{R}^n\times[0,\rho])=C([0,\rho];C_b(\mathbb{R}^n))$. 
\begin{lemma}
\label{lemmawell}
The operator $A$ is well defined as an map on $X_\rho$. 
\end{lemma}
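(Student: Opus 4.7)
The plan is to verify the two defining properties of $X_\rho$: that $Au$ is jointly continuous on $\mathbb{R}^n \times [0,\rho]$ and that it is bounded (equivalently, that $t \mapsto Au(\cdot,t)$ is a continuous $C_b(\mathbb{R}^n)$-valued curve). Since $Au(x,t) = \int_0^t(Fu)(x,s)\,ds$ with $Fu = -u + Ju$, everything reduces to estimates on $Fu$ that are already in hand.

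First I would dispose of boundedness. For $u \in X_\rho$, set $M := \|u\|_{X_\rho}$. By Lemma \ref{le:1}, $|(Ju)(x,s)| \le (1+\gamma)C_w$, hence $|(Fu)(x,s)| \le M + (1+\gamma)C_w =: K$ uniformly on $\mathbb{R}^n \times [0,\rho]$. Integrating gives $|(Au)(x,t)| \le tK \le \rho K$ for every $(x,t)$, which takes care of boundedness.

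Next, continuity in the time variable is essentially free: for $t,\tilde t \in [0,\rho]$ and every $x \in \mathbb{R}^n$,
\[
|(Au)(x,t) - (Au)(x,\tilde t)| \;=\; \Bigl|\int_{\tilde t}^{t} (Fu)(x,s)\,ds\Bigr| \;\le\; K\,|t-\tilde t|,
\]
so $\|Au(\cdot,t) - Au(\cdot,\tilde t)\|_\infty \le K|t-\tilde t|$, giving Lipschitz continuity of $t \mapsto Au(\cdot,t)$ into $C_b(\mathbb{R}^n)$. The remaining point is continuity in $x$ at each fixed $t$: for a sequence $x_n \to \tilde x$, Lemma \ref{lemmaF} gives $(Fu)(x_n,s) \to (Fu)(\tilde x, s)$ pointwise in $s$, with the dominating bound $|(Fu)(x_n,s)| \le K$ valid on $[0,\rho]$ (a finite measure interval). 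The Dominated Convergence Theorem then yields $(Au)(x_n,t) \to (Au)(\tilde x,t)$, so $Au(\cdot,t) \in C_b(\mathbb{R}^n)$. Combining the two continuity statements gives joint continuity and hence $Au \in X_\rho$.

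I do not anticipate a real obstacle here: the estimate from Lemma \ref{le:1} and the continuity properties of $Fu$ from Lemma \ref{lemmaF} were tailor-made for this lemma, and the only subtle point is making sure one invokes the correct mode of continuity (uniform in $x$ for the $t$-argument, pointwise plus DCT for the $x$-argument) so that the image really lies in $C([0,\rho]; C_b(\mathbb{R}^n))$ rather than just in joint $C_b$.
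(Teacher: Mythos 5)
Your proposal is correct and follows essentially the same route as the paper: the paper's proof also reduces everything to the bound $|(Ju)(x,s)|\le(1+\gamma)C_w$ and the resulting Lipschitz estimate $|(Au)(x,t_1)-(Au)(x,t_2)|\le (C+(1+\gamma)C_w)|t_1-t_2|$ for continuity in $t$, while delegating $(Au)(\cdot,t)\in C_b(\RR^m)$ to ``previous lemmas.'' You merely make explicit two points the paper leaves implicit --- the dominated-convergence argument for continuity in $x$ and the uniformity in $x$ of the time estimate needed to land in $C([0,\rho];C_b(\RR^n))$ --- which is a slight improvement in rigor but not a different proof.
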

\begin{proof}
Let $u \in X_\rho$. That $(Au)(\cdot,t) \in C_b(\RR^m)$ is a direct consecuence of previous lemmas. We only need to see that $(Au)(x,\cdot)$ is continuous. Let us take $t_1, t_2 \in (0,\rho]$. Then for $x$ fixed,
\begin{align}
|(Au)(x,t_1) - (Au)(x,t_2)| &= \left|\int_0^{t_1} (Fu)(x,s) ds - \int_0^{t_2} (Fu)(x,s) ds\right|,\\
&= \left|\int_{t_2}^{t_1} (-u(x,s) + (Ju)(x,s)) ds\right|,\\
&\le \int_{t_2}^{t_1} \left|u(x,s)\right| ds + \int_{t_2}^{t_1} \left|(Ju)(x,s)) \right|ds,\\
&\le C|t_1 - t_2| + (1 + \gamma)C_w|t_1 - t_2|,
\end{align}
so, $(Au)(x,\cdot)$ is continuous on $(0,\rho]$. The continuity at zero can be seen by noting that
\begin{align}
|(Au)(x,t_1) - (Au)(x,0)| &=  \left|\int_{0}^{t_1} (-u(x,s) + (Ju)(x,s)) ds\right|.\\
&\le Ct_1 + (1 + \gamma)C_w(t_1).
\end{align} 
 This completes the proof.
\end{proof}
\begin{theorem}
\label{contraction}
The operator $A$ is a contraction in $X_\rho$ for $\gamma = \gamma_0$ fixed and $\rho > 0$ suffieciently small.
\end{theorem}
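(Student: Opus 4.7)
The plan is to bound $\|Au - Av\|_{X_\rho}$ by $\rho \cdot K(\gamma_0)\, \|u - v\|_{X_\rho}$ for an explicit constant $K(\gamma_0)$ depending on $\gamma_0$, $C_w$, and the Lipschitz constants of $f$ and $g$, and then take $\rho < 1/K(\gamma_0)$. First I would write
\[
(Au - Av)(x,t) = \int_0^t \bigl[-(u-v)(x,s) + (Ju - Jv)(x,s)\bigr]\,ds,
\]
so that the $-(u-v)$ piece already contributes at most $\rho\,\|u-v\|_{X_\rho}$, and the whole argument reduces to a pointwise, uniform estimate on $|Ju - Jv|(x,s)$.

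For that estimate I would exploit $f, g \in C_b^1(\mathbb{R})$: set $L_f := \|f'\|_\infty$ and $L_g := \|g'\|_\infty$, so $f$ and $g$ are globally Lipschitz with those constants. Adding and subtracting $g\bigl(u(x,s)-u(y,s)\bigr)\, f(v(y,s))$ inside the plasticity term splits the difference as
\begin{align*}
(Ju - Jv)(x,s) ={}& \int_\Omega w(x,y)\bigl[f(u(y,s)) - f(v(y,s))\bigr]\,dy \\
&+ \gamma_0 \int_\Omega w(x,y)\, g\bigl(u(x,s)-u(y,s)\bigr)\bigl[f(u(y,s)) - f(v(y,s))\bigr]\,dy \\
&+ \gamma_0 \int_\Omega w(x,y)\bigl[g\bigl(u(x,s)-u(y,s)\bigr) - g\bigl(v(x,s)-v(y,s)\bigr)\bigr] f(v(y,s))\,dy.
\end{align*}
Using $0 \le f, g \le 1$, the Lipschitz bound $|g(a)-g(b)| \le L_g\bigl(|u(x,s)-v(x,s)|+|u(y,s)-v(y,s)|\bigr)$ for the third piece, and $\|w(x,\cdot)\|_{L^1_y} \le C_w$, the three integrals are controlled by $C_w L_f$, $\gamma_0 C_w L_f$, and $2\gamma_0 C_w L_g$ times $\|u-v\|_{X_\rho}$, respectively.

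Putting these together gives $|Au - Av|(x,t) \le \rho\, K(\gamma_0)\, \|u - v\|_{X_\rho}$ with $K(\gamma_0) = 1 + C_w L_f + \gamma_0 C_w(L_f + 2L_g)$; taking the supremum over $(x,t) \in \mathbb{R}^n \times [0,\rho]$ and choosing any $\rho < 1/K(\gamma_0)$ makes $A$ a strict contraction. I expect no serious obstacle beyond bookkeeping: the only genuine subtlety is that the plasticity term couples $u(x,s)$ with $u(y,s)$ through $g$, forcing the add-and-subtract step above. Notably, no a priori amplitude bound on $u, v$ is required, since $f$ and $g$ are globally Lipschitz, so the estimate is uniform on all of $X_\rho$ and does not need to invoke Lemma \ref{global}.
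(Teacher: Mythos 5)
Your proposal is correct and follows essentially the same route as the paper: the paper splits $A$ into $A_1+A_2+A_3$ (the linear part, the bare $f$-term, and the plasticity term) and performs exactly your add-and-subtract on the plasticity term, arriving at the same contraction constant $\rho\bigl[1+LC_w+\gamma(L+2K)C_w\bigr]$ with $L=L_f$, $K=L_g$. Your closing observation that no a priori amplitude bound is needed is also consistent with the paper, whose proof never invokes the global estimate lemma.
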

\begin{proof}
The operator $A$ can be divided in three different operators as 
\begin{equation}
A = A_1 + A_2 + A_3, 
\end{equation}
where $A_1$ is the linear integral operator
\begin{equation}
\label{eq:A1}
(A_1v)(x,t):=-\int_0^tv(x,s)\;ds,\quad 0\le t \le \rho,
\end{equation}
and $A_2$ and $A_3$, are the nonlinear integral operators
\begin{equation}
\label{eq:A2}
(A_2v)(x,t):=\int_0^t  \int_{\mathbb{R}^m} w(x,y)f(v(y,s)) \;dy\;ds,\quad 0\le t \le \rho,
\end{equation}
\begin{equation}
\label{eq:A3}
(A_3v)(x,t):=\gamma\int_0^t  \int_{\mathbb{R}^m} w(x,y)g(v(x,s) - v(y,s))f(v(y,s)) \;dy\;ds,\quad 0\le t \le \rho.
\end{equation}
Let $u := u_1 - u_2$. Then $A_1$ satisfies
\begin{equation}
\Vert A_1u \Vert_{\rho}\le \rho \left\Vert u  \right\Vert_\rho.
\end{equation}

Since $f \in C_b^1(\mathbb{R})$, then there is a constant $L$ such that $|f(s) - f(\tilde{s})| \le L|s - \tilde{s}|$ for all $s,\tilde{s} \in \mathbb{R}$. Therefore
\begin{align}
\int_{\mathbb{R}^m} |w(x,y)||f(u_1(y,t)) - f(u_2(y,t))| \;dy &\le LC_w\Vert u_1 - u_2\Vert_{\rho},
\end{align}
and 
\begin{align}
\vert (A_2u_1)(x,t) - (A_2u_2)(x,t)\vert &\le \int_0^t  \int_{\mathbb{R}^m}\vert w(x,y)\vert \vert f(u_1(y,s)) - f(u_2(y,s))\vert dy ds\\
&\le \rho LC_w\Vert u_1 - u_2 \Vert_{\rho},
\end{align}
so
\begin{equation}
\Vert A_2u_1 - A_2u_2 \Vert_{\rho} \le \rho LC_w\Vert u_1 - u_2 \Vert_{\rho}.
\end{equation}

Finally, we obtain an estimative for $A_3$.\\

Considering that function $g$ is also Lipschitz with Lipschitz constant $K$, we have

\begin{align}
&\left\vert\int_{\mathbb{R}^m} w(x,y) \big[ g(u_1(x,t) - u_1(y,t))f(u_1(y,t)) -  g(u_2(x,t) - u_2(y,t))f(u_2(y,t))\big]\;dy \right\vert\\
&\le \int_{\mathbb{R}^m}  \big|w(x,y) \big| \big|g(u_1(x,t) - u_1(y,t)) \big| \big|f(u_1(y,t)) - f(u_2(y,t)) \big|\; dy +\notag\\
&\mathrel{\phantom{=}} \int_{\mathbb{R}^m}  \big|w(x,y) \big| \big|f(u_2(y,t)) \big| \big|g(u_1(x,t) - u_1(y,t)) - g(u_2(x,t) - u_2(y,t)) \big| \;dy\\
&\le L\int_{\mathbb{R}^m} \big |w(x,y) \big| \big|u_1(y,t) - u_2(y,t) \big| \;dy + \notag\\
&\mathrel{\phantom{=}} K \int_{\mathbb{R}^m}  \big|w(x,y) \big| \big|u_1(x,t) - u_2(x,t) + u_2(y,t) - u_1(y,t) \big|\; dy\\
&\le L \Vert u_1 - u_2 \Vert_\infty \int_{\mathbb{R}^m}  \big|w(x,y) \big| \;dy + 2K \Vert u_1 - u_2 \Vert_\infty \int_{\mathbb{R}^m}  \big|w(x,y) \big|\;dy\\
&\le C_w (L + 2K) \Vert u_1 - u_2 \Vert_\rho,
\end{align}
then, 
\begin{align}
|(A_3u_1 - A_3u_2 )(x,t)| &\le \gamma \int_0^t  \int_{\mathbb{R}^m} \big\vert w(x,y)[g(u_1(x,s) - u_1(y,s))f(u_1(y,s)) - \notag \\ 
&\mathrel{\phantom{=}}  \qquad \qquad g(u_2(x,s) - u_2(y,s))f(u_2(y,s))] \big\vert dy ds \\
&\le \gamma\rho C_w (L + 2K) \Vert u_1 - u_2 \Vert_\rho,
\end{align}
and therefore,
\begin{equation}
\Vert A_3u_1 - A_3u_2 \Vert_{\rho} \le \gamma \rho (L+ 2K)C_w\Vert u_1 - u_2 \Vert_{\rho}.
\end{equation}
Summarizing, we have
\begin{equation}
\label{eq:condicion}
\Vert Au_1 - Au_2 \Vert_\rho \le \rho[1 + LC_w + \gamma(L+2K)C_w]\Vert u_1 - u_2\Vert_\rho,
\end{equation}
thus, $A = A_1 + A_2 + A_3$ is a contraction for $0 < q :=  \rho[1 + LC_w + \gamma(L+2K)C_w] < 1$ .
\end{proof}
\begin{theorem}[Local existence of solutions]
\label{thmexistence}
Let $L$ and $K$ being the Lipschitz constants of $f$ and $g$ respectively, $\gamma = \gamma_0$ fixed and $\rho > 0$ such that $0 < q < 1$. Then for all $u_0 \in C_b(\RR^m)$ there exists a unique solution $u \in X_\rho$ for the IVP \eqref{eq:opkerdyn}.
\end{theorem}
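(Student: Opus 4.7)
The plan is to apply the Banach fixed point theorem to the map $\Phi:X_\rho \to X_\rho$ defined by $\Phi(\varphi) := u_0 + A\varphi$, using the machinery already developed in the preceding lemmas.

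First I would check that $\Phi$ indeed sends $X_\rho$ into itself. Since $u_0 \in C_b(\mathbb{R}^m)$ (viewed as time-independent, hence trivially in $X_\rho$) and Lemma \ref{lemmawell} guarantees that $A\varphi \in X_\rho$ whenever $\varphi \in X_\rho$, the sum $u_0 + A\varphi$ lies in $X_\rho$. The space $X_\rho = C([0,\rho];C_b(\mathbb{R}^n))$ is a Banach space, which is the setting needed for the contraction principle.

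Next I would verify that $\Phi$ is a contraction. For $u_1,u_2 \in X_\rho$, we have
\[
\Phi(u_1) - \Phi(u_2) = Au_1 - Au_2,
\]
so by Theorem \ref{contraction} (inequality \eqref{eq:condicion}),
\[
\|\Phi(u_1) - \Phi(u_2)\|_\rho \le q\,\|u_1 - u_2\|_\rho,
\]
with $q = \rho[1 + LC_w + \gamma(L+2K)C_w] < 1$ by hypothesis. The Banach fixed point theorem then yields a unique $u \in X_\rho$ with $\Phi(u) = u$, i.e. $u = u_0 + Au$, which is exactly equation \eqref{eq:nfvolterra}.

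Finally, I would invoke Lemma \ref{lemavolterra} to translate this integral equation into the original differential formulation \eqref{eq:opkerdyn}, which gives the desired solution of the IVP with initial datum $u_0$. Uniqueness in $X_\rho$ is immediate from the uniqueness of the fixed point. No real obstacles arise here; all the analytic work (the mapping property and the Lipschitz estimates for $A_1,A_2,A_3$) has already been absorbed into Lemmas \ref{lemmawell}, \ref{lemavolterra}, and Theorem \ref{contraction}, so this theorem is essentially a packaging result.
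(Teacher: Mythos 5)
Your proposal is correct and follows essentially the same route as the paper: both apply the Banach fixed point theorem to $\tilde{A}u = u_0 + Au$ on the Banach space $X_\rho$, using Theorem \ref{contraction} for the contraction estimate and Lemma \ref{lemavolterra} to pass from the integral equation \eqref{eq:nfvolterra} back to the IVP \eqref{eq:opkerdyn}. Your explicit check that $\Phi$ maps $X_\rho$ into itself via Lemma \ref{lemmawell} is a detail the paper leaves implicit, but there is no substantive difference.
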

\begin{proof}
The space $X_\rho$ is a Banach space and the operator $\tilde{A}u = u_0 + Au$ is a contraction on $X_\rho$. Then the equation $u = \tilde{A}u$ has an unique fixed point $u^*$, which corresponds with the solution to \eqref{eq:nfvolterra} and, by lemma \ref{lemavolterra}, to the solution of \eqref{eq:opkerdyn}. Because the operator $\tilde{A}$ is defined on all the space $X_\rho$, the solution to  \eqref{eq:opkerdyn} is unique.
\end{proof}

\begin{theorem}[Global existence of solutions]
\label{globalc}
For all $u_0 \in C_b(\RR^m)$ there is a solution $u \in C_b(\RR^m \times [0,\infty))$.
\end{theorem}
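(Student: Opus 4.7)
The plan is a standard bootstrapping argument: iterate the local existence result of Theorem \ref{thmexistence} using the global a priori bound of Lemma \ref{global} to cover all of $[0,\infty)$. The decisive observation is that the contraction condition
\[
q = \rho[1 + LC_w + \gamma(L+2K)C_w] < 1
\]
is \emph{independent} of the initial datum $u_0$: the existence time $\rho$ may be chosen uniformly in terms of the structural constants $L, K, C_w, \gamma$ alone. This uniformity is what allows us to glue local solutions together.

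First I would fix once and for all a value $\rho_0 \in (0, 1/[1 + LC_w + \gamma(L+2K)C_w])$, so that $q<1$ holds for this choice. Given $u_0 \in C_b(\mathbb{R}^m)$, Theorem \ref{thmexistence} produces a unique solution $u \in X_{\rho_0}$ with $u(\cdot,0) = u_0$. By Lemma \ref{global}, this solution satisfies the pointwise bound $|u(x,t)| \le C = \max\{\|u_0\|_\infty, (1+\gamma)C_w\}$ on $\mathbb{R}^m \times [0,\rho_0]$. In particular, the trace $u_1(\cdot) := u(\cdot, \rho_0)$ belongs to $C_b(\mathbb{R}^m)$ (continuity at time $\rho_0$ in $x$ comes from Lemma \ref{lemmaF}, and the bound $\|u_1\|_\infty \le C$ is just the global estimate).

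Next I would reapply Theorem \ref{thmexistence} with initial datum $u_1$ and the same existence time $\rho_0$, obtaining a unique solution $\tilde u \in X_{\rho_0}$ with $\tilde u(\cdot, 0) = u_1$. Defining
\[
\hat u(x,t) := \begin{cases} u(x,t), & t \in [0, \rho_0], \\ \tilde u(x, t - \rho_0), & t \in [\rho_0, 2\rho_0], \end{cases}
\]
the two pieces agree at $t = \rho_0$ by construction, so $\hat u \in C_b(\mathbb{R}^m \times [0, 2\rho_0])$, and each piece solves the Volterra equation \eqref{eq:nfvolterra} on its subinterval. By Lemma \ref{lemavolterra} and the continuity of the right-hand side at $t=\rho_0$, $\hat u$ is differentiable in $t$ there and solves \eqref{eq:opkerdyn} on $[0,2\rho_0]$.

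Iterating this procedure, at step $n$ I use $u_n := \hat u(\cdot, n\rho_0) \in C_b(\mathbb{R}^m)$ (whose norm is again controlled by $C$ thanks to Lemma \ref{global} applied on each subinterval, noting that $\|u_n\|_\infty \le \max\{\|u_0\|_\infty, (1+\gamma)C_w\}$ propagates) as the new initial datum to extend the solution to $[(n)\rho_0, (n+1)\rho_0]$. Since $n\rho_0 \to \infty$, this produces a solution $u \in C_b(\mathbb{R}^m \times [0,\infty))$ after countably many extensions. The only point that requires care, and which I consider the main (mild) obstacle, is verifying that the extension time $\rho_0$ can indeed be kept constant across iterations; but this is precisely the content of the uniformity of $q < 1$ in the initial datum, combined with the uniform a priori bound of Lemma \ref{global} which prevents blow-up of the successive $\|u_n\|_\infty$.
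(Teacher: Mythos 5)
Your proposal is correct and follows essentially the same route as the paper: both exploit that the contraction time $\rho$ from Theorem \ref{thmexistence} depends only on the structural constants $L, K, C_w, \gamma$ and not on the initial datum, and then iterate the local result on $[n\rho_0,(n+1)\rho_0]$ using the a priori bound of Lemma \ref{global} to keep the traces in $C_b(\mathbb{R}^m)$. Your version is somewhat more careful than the paper's about the gluing at the junction times (continuity and differentiability at $t=n\rho_0$ via Lemma \ref{lemavolterra}), but the underlying argument is the same.
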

\begin{proof}
Note that neither \eqref{eq:opkerdyn} nor the obtained estimates depend on time explicitly and $\rho$ in theorem \ref{thmexistence} does not depend on the initial data $u_0$. Also, the theorem \ref{global} guarantees that the solution is bounded. Then, we can repeat the presented argument for initial data $u(x,\rho) \in C_b(\RR^m)$ and extend the solution to an interval $[\rho, 2\rho)$. Iterating this, we can cover the entire interval $[0,\infty)$.
\end{proof}
\begin{theorem}[Continuous dependence on initial data]
\label{dependence1}
Let $u$ and $v$ be solutions to \eqref{eq:nfvolterra}, with initial data $u_0$ and $v_0$ respectively. Then, for a fixed $\gamma$ and for every $\rho>0$, there exists a constant $C = C(\rho)$ such that
\begin{equation}
\Vert u - v\Vert_\rho \le C\Vert u_0 - v_0 \Vert_{C_b(\mathbb{R}^m)}.
\end{equation}
\end{theorem}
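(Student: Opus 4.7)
The plan is to subtract the two Volterra representations \eqref{eq:nfvolterra} and then invoke Gronwall's inequality. From $u = u_0 + Au$ and $v = v_0 + Av$ one obtains, for each $(x,t)$,
\[
|u(x,t) - v(x,t)| \le \|u_0 - v_0\|_{C_b(\mathbb{R}^m)} + \int_0^t |Fu(x,s) - Fv(x,s)|\,ds.
\]
First I would recycle the Lipschitz bounds already established in the proof of Theorem \ref{contraction}. The inner estimates used there (the displays preceding \eqref{eq:condicion}) are in fact pointwise in $s$: for every fixed time,
\[
|Ju(x,s) - Jv(x,s)| \le \bigl[LC_w + \gamma(L+2K)C_w\bigr]\sup_{y\in\mathbb{R}^m}|u(y,s) - v(y,s)|,
\]
so that with $M := 1 + LC_w + \gamma(L+2K)C_w$ one has $|Fu(x,s) - Fv(x,s)| \le M\,\sup_{y}|u(y,s) - v(y,s)|$.

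Next, define $h(t) := \sup_{0\le s\le t}\|u(\cdot,s) - v(\cdot,s)\|_{C_b(\mathbb{R}^m)}$, which is finite on $[0,\rho]$ thanks to the global bound of Lemma \ref{global}. Combining the two displays above, taking the supremum in $x$ and then monotonising in $s\in[0,t]$ yields
\[
h(t) \le \|u_0 - v_0\|_{C_b(\mathbb{R}^m)} + M\int_0^t h(s)\,ds.
\]
The classical Gronwall lemma then gives $h(t) \le e^{Mt}\|u_0 - v_0\|_{C_b(\mathbb{R}^m)}$, so choosing $C(\rho) := e^{M\rho}$ settles the bound on $\|u - v\|_\rho = h(\rho)$.

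An alternative route avoids Gronwall by iterating the contraction estimate \eqref{eq:condicion} directly: on a short interval $[0,\rho_0]$ with $q := \rho_0 M < 1$ one obtains $\|u - v\|_{\rho_0} \le (1-q)^{-1}\|u_0 - v_0\|_{C_b(\mathbb{R}^m)}$, and then, as in the extension argument of Theorem \ref{globalc}, one chains consecutive intervals of length $\rho_0$ together, picking up a factor $(1-q)^{-1}$ at each step and producing a constant $C(\rho) = (1-q)^{-\lceil\rho/\rho_0\rceil}$. No serious obstacle appears in either route; the only item that deserves a careful check is that the bounds in Theorem \ref{contraction}, although stated in the $X_\rho$-norm, are genuinely pointwise in time — but this is immediate from the computations there, since $w \in L^\infty_x L^1_y$ allows the sup-in-$y$ factor to be extracted from the $y$-integral at each fixed $s$. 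With that verification in hand, the continuous-dependence inequality follows.
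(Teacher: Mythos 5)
Your argument is correct, but your primary route is genuinely different from the one in the paper. The paper's proof is exactly your ``alternative route'': it first restricts to $\rho$ small enough that $q=\rho\,[1+LC_w+\gamma(L+2K)C_w]<1$, writes $\Vert u-v\Vert_\rho\le\Vert u_0-v_0\Vert+q\Vert u-v\Vert_\rho$ to get the constant $(1-q)^{-1}$ on that short interval, and then chains intervals of length $\rho$ to reach an arbitrary time (the paper is in fact less explicit than you are about the constant compounding at each step, so your bookkeeping $C(\rho)=(1-q)^{-\lceil\rho/\rho_0\rceil}$ is a welcome clarification). Your main route via Gronwall is cleaner in two respects: it produces a bound valid on an arbitrary interval $[0,\rho]$ in one stroke, with the explicit constant $e^{M\rho}$, and it does not require first shrinking $\rho$ --- which matches the ``for every $\rho>0$'' in the statement more directly than the paper's own proof does. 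Your key verification is also sound: the estimates in Theorem \ref{contraction} are indeed pointwise in $s$, since at each fixed time the factor $\sup_y|u(y,s)-v(y,s)|$ comes out of the $y$-integral against $\vert w(x,\cdot)\vert\in L^1_y$, giving
\begin{equation*}
|(Fu)(x,s)-(Fv)(x,s)|\le M\,\sup_{y}|u(y,s)-v(y,s)|,\qquad M=1+LC_w+\gamma(L+2K)C_w,
\end{equation*}
and the function $h(t)=\sup_{0\le s\le t}\Vert u(\cdot,s)-v(\cdot,s)\Vert_{C_b(\mathbb{R}^m)}$ is continuous and finite (this already follows from $u,v\in X_\rho$, so the appeal to Lemma \ref{global} is not even needed), so the classical Gronwall lemma applies. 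Both approaches are valid; the contraction-plus-chaining argument recycles Theorem \ref{contraction} verbatim, while Gronwall buys a one-step proof with a sharper, explicit dependence of $C$ on $\rho$.
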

\begin{proof}
From theorem \ref{contraction} we have
\begin{align}
\Vert Au - Av\Vert_\rho \le q\Vert u - v\Vert_\rho,
\end{align}
choosing $\rho>0$ such that $q: = \rho [1 + LC_w + \gamma(L + 2K)C_w] < 1$.\\

Then if $u$ and $v$ are solutions to \eqref{eq:nfvolterra}, with initial data $u_0$ and $v_0$ respectively, we obtain
\begin{align}
\Vert u - v\Vert_\rho \le \Vert u_0 - v_0\Vert_{C_b(\mathbb{R}^m)} + \Vert Au - Av\Vert_\rho \le \Vert u_0 - v_0\Vert_{C_b(\mathbb{R}^m)} + q\Vert u - v\Vert_\rho,
\end{align}
so
\begin{align}
\Vert u - v\Vert_\rho \le C\Vert u_0 - v_0\Vert_{C_b(\mathbb{R}^m)}
\end{align}
with $C = \frac{1}{1 - q}$.\\

Chosen that $\rho$, we can repeat the latter argument in $[\rho,2\rho]$ and we will obtain the same estimative. Repeating the argument many times as necessary, cover any interval. 
\end{proof}

The following result is an immediate consequence of theorem \ref{global}, and says that a purely excitatory network, remains excitatory

\begin{theorem}[Positivity]
 Let $u_0 \in C_b(\RR^m)$, and $\gamma > 0$. Suppose $u_0(x) \ge 0$ for all $x \in \RR^m$. If the synaptic kernel $w:\RR^m \times \RR^m \rightarrow \RR$, satisfy:	
\begin{equation}
 w(x,y) > 0 \quad \forall\, x,y \in \RR^m,
\end{equation}
then 
\begin{equation}
u(x,t) \ge 0\quad\forall\, x \in\RR^m, t \ge 0.
\end{equation}
 \end{theorem}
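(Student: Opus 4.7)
The plan is to use the variation of constants formula applied to the equation \eqref{E:PVI1}, treating the integral term as a non-homogeneous forcing whose sign is controlled by the hypotheses.

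First, I would fix $x \in \mathbb{R}^m$ and consider $t \mapsto u(x,t)$ as a solution of the scalar linear (non-autonomous) ODE
\begin{equation*}
\frac{d}{dt} u(x,t) + u(x,t) = h(x,t), \qquad h(x,t) := (Ju)(x,t),
\end{equation*}
with initial datum $u(x,0) = u_0(x) \ge 0$. Multiplying by the integrating factor $e^t$ and integrating from $0$ to $t$ gives the representation
\begin{equation*}
u(x,t) = e^{-t} u_0(x) + \int_0^t e^{-(t-s)} h(x,s)\, ds.
\end{equation*}

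The second step is to verify that $h(x,s) \ge 0$ for all $x \in \mathbb{R}^m$ and $s \ge 0$. By Definition \ref{def:learning} we have $0 \le f \le 1$ and $0 \le g \le 1$, and by hypothesis $\gamma \ge 0$ and $w(x,y) > 0$, so the integrand
\begin{equation*}
w(x,y)\bigl[1 + \gamma g(u(x,s) - u(y,s))\bigr] f(u(y,s))
\end{equation*}
is pointwise non-negative, whence $h(x,s) \ge 0$.

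Combining these two observations, both terms in the variation of constants formula are non-negative: the first because $u_0 \ge 0$ and $e^{-t} > 0$, the second because the integrand is non-negative. Therefore $u(x,t) \ge 0$ for every $(x,t) \in \mathbb{R}^m \times [0, \infty)$, which is the claim. There is no real obstacle here; the only point to check is that $h(x,\cdot)$ is well defined and measurable so that the integral representation makes sense, and this is already guaranteed by Lemma \ref{lemmaF} and the continuity of the solution produced by Theorem \ref{globalc}.
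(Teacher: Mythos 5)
Your argument is correct, and it is worth noting that it is not the route the paper takes. The paper's own proof is a one-line appeal to the comparison argument of Lemma \ref{global}: it asserts that $u$ is trapped between the initial datum and the constant $c/b>0$ arising from the bounding ODEs $y'=-by\pm c$. As written, that proof never invokes the hypothesis $w(x,y)>0$, and the lower comparison solution in Lemma \ref{global} actually tends to $-c/b<0$, so the cited lemma by itself does not yield positivity; what is implicitly needed is exactly the observation you make explicit, namely that the forcing term $(Ju)(x,t)$ is non-negative, so that the relevant lower comparison is $u'\ge -u$ rather than $u'\ge -u-c$. Your proof supplies this missing step cleanly: you write the solution via the integrating factor as
\begin{equation*}
u(x,t)=e^{-t}u_0(x)+\int_0^t e^{-(t-s)}(Ju)(x,s)\,ds
\end{equation*}
(the same representation the paper uses later, in \eqref{variation}, for the $L^1$ theory), and then check from Definition \ref{def:learning} and the positivity of $w$ that the integrand of $Ju$ is pointwise non-negative, so both summands are non-negative. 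The two approaches buy essentially the same thing, but yours localizes precisely where the hypothesis on $w$ enters and avoids the sign issue in the comparison-function argument; a comparison-principle proof could be repaired equally well by noting $u'\ge -u$ and comparing with $y'=-y$, $y(0)=u_0(x)\ge0$. One cosmetic remark: the non-negativity of $Ju$ only requires $\gamma\ge0$, $0\le f$, $0\le g$ and $w\ge0$, so your argument in fact proves slightly more than the stated theorem requires.
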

 
 \begin{proof}
We already saw in theorem \ref{global} that $c\le u \le a$ or $a \le u \le c$, with $c > 0$ and $a = u_0$. Therefore, we can conclude that $u(x,t) \ge 0$ for all $t \in [0,\infty)$. 
 \end{proof}

 Now, we will see that the Amary's model is a good approximation of the model proposed by Abbassian et al. More precisely:

\begin{theorem}[Annulling Plasticity Limit]
Let $u^\gamma$ and $u$ the solutions to the problems \eqref{E:PVI1}-\eqref{E:PVI2} and \eqref{E:Amari} with initial data $u_0^\gamma$ and $u_0$ respectively. If
\begin{equation}
\lim_{\gamma \rightarrow 0} \|u_0^\gamma - u_0 \|_{C_b(\mathbb{R}^m)} = 0,
\end{equation}
then:
 \begin{equation}
\lim_{\gamma \rightarrow 0} \| u^\gamma - u \|_{C_b(\mathbb{R}^m \times [0,\infty))} = 0.
\end{equation} 
\end{theorem}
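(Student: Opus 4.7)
My plan is to pass to the Duhamel (variation-of-constants) form of both equations, subtract, and close the resulting integral inequality by comparison with a scalar linear ODE. Writing $v^\gamma := u^\gamma - u$ and integrating the linear part $w_t + w = (\text{rhs})$ gives
\[
v^\gamma(x,t) = e^{-t}(u_0^\gamma - u_0)(x) + \int_0^t e^{-(t-s)}\bigl[H^\gamma(x,s) + \gamma G^\gamma(x,s)\bigr]\,ds,
\]
where $H^\gamma(x,s) := \int_{\mathbb{R}^m} w(x,y)[f(u^\gamma(y,s)) - f(u(y,s))]\,dy$ collects the Amari-type difference and $G^\gamma(x,s) := \int_{\mathbb{R}^m} w(x,y)\,g(u^\gamma(x,s) - u^\gamma(y,s))\,f(u^\gamma(y,s))\,dy$ is the plasticity perturbation evaluated along the plastic solution.

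Next I would estimate these two ingredients using only the standing hypotheses on $w$, $f$, $g$. Let $L$ denote the Lipschitz constant of $f$ and $C_w$ the bound on $\|w\|_{L^\infty_x L^1_y}$ from the synaptic kernel definition. Then $|H^\gamma(x,s)| \le LC_w\,\phi(s)$, where $\phi(t) := \|v^\gamma(\cdot,t)\|_\infty$ (finite by Lemma \ref{global}), while $|G^\gamma(x,s)| \le C_w$ since $0 \le f, g \le 1$. Taking the supremum over $x \in \mathbb{R}^m$ in the Duhamel identity yields the scalar integral inequality
\[
\phi(t) \le e^{-t}\|u_0^\gamma - u_0\|_\infty + \gamma C_w(1 - e^{-t}) + LC_w\int_0^t e^{-(t-s)}\phi(s)\,ds.
\]

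Finally, I would compare with the scalar linear ODE $\tilde\phi' = (LC_w - 1)\tilde\phi + \gamma C_w$ with $\tilde\phi(0) = \|u_0^\gamma - u_0\|_\infty$, whose explicit solution provides
\[
\phi(t) \le e^{(LC_w - 1)t}\|u_0^\gamma - u_0\|_\infty + \frac{\gamma C_w}{1 - LC_w}\bigl(1 - e^{(LC_w - 1)t}\bigr).
\]
Under the smallness condition $LC_w < 1$, the right-hand side is uniformly bounded in $t \ge 0$ by $\|u_0^\gamma - u_0\|_\infty + \gamma C_w/(1 - LC_w)$, and sending $\gamma \to 0$ together with $\|u_0^\gamma - u_0\|_\infty \to 0$ gives the desired convergence in $C_b(\mathbb{R}^m \times [0, \infty))$. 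The main obstacle is exactly this dissipation condition $LC_w < 1$, which is not listed as an explicit hypothesis: without it, naive Gr\"onwall produces a factor $e^{LC_w t}$ and one only obtains uniform convergence on bounded time intervals, compatible with the local iteration of Theorem \ref{contraction} (writing $v^\gamma = (u_0^\gamma - u_0) + (A^\gamma u^\gamma - A^\gamma u) + A_3^\gamma u$ with $\|A_3^\gamma u\|_\rho \le \gamma \rho C_w$) but not with the global conclusion stated. The crux of the proof is therefore either to extract a smallness condition of the form $LC_w < 1$ from the hypotheses, or to exploit extra dissipation to close the inequality uniformly in $t$.
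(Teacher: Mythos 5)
Your estimates are correct, but your route differs from the paper's in one essential respect: you pass to the Duhamel form $v^\gamma(x,t)=e^{-t}(u_0^\gamma-u_0)(x)+\int_0^t e^{-(t-s)}[H^\gamma+\gamma G^\gamma]\,ds$, thereby keeping the dissipative factor $e^{-(t-s)}$, whereas the paper works with the crude integrated form \eqref{eq:nfvolterra}: it bounds $|Au(x,t)-A^\gamma u^\gamma(x,t)|$ by $\rho[1+(L+\gamma K)C_w]\,\Vert u-u^\gamma\Vert$, absorbs this for $\rho$ small to get $\Vert u-u^\gamma\Vert_\rho\le \frac{1}{1-\rho[1+(L+\gamma K)C_w]}\Vert u_0-u_0^\gamma\Vert$, and then iterates over $[\rho,2\rho)$, $[2\rho,3\rho)$, and so on, exactly as in Theorems \ref{contraction} and \ref{dependence1}. (Incidentally, the paper's bound $N_2\le KC_w\Vert u-u^\gamma\Vert$ is not right as written, since $N_2$ contains no difference of solutions; your bound $|G^\gamma|\le C_w$ is the correct one, and it is what produces the additive $\gamma C_w$ term in your integral inequality.)

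The obstruction you isolate is genuine, and it is present in the paper's own argument as well, merely better hidden. The iteration constant $C=(1-q)^{-1}$ is strictly greater than $1$, so after $n$ steps the bound on $[n\rho,(n+1)\rho]$ is of order $C^{n+1}\Vert u_0-u_0^\gamma\Vert$ plus an accumulating $O(\gamma)$ contribution; this controls $\sup_{0\le t\le T}$ for every finite $T$ but not $\sup_{t\ge0}$, which is what the norm $\Vert\cdot\Vert_{C_b(\mathbb{R}^m\times[0,\infty))}$ in the conclusion requires. Your Duhamel comparison makes the missing ingredient explicit: one needs the dissipation of the $-u$ term to dominate the Lipschitz feedback, i.e. a condition of the form $LC_w<1$, under which your bound $\phi(t)\le e^{(LC_w-1)t}\Vert u_0^\gamma-u_0\Vert_\infty+\frac{\gamma C_w}{1-LC_w}(1-e^{(LC_w-1)t})$ is uniform in $t$ and the theorem follows. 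Without such a hypothesis, both your argument and the paper's yield only uniform convergence on compact time intervals. So your proposal should be regarded as correct up to an extra smallness assumption that the statement omits; the paper's proof does not supply a mechanism that removes it.
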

\begin{proof}
We can put the IVP \eqref{E:Amari} like a Volterra integro differential equation too, similar to \eqref{eq:nfvolterra}, but with $\omega$ instead of $\omega_{g,\gamma}$. Then we have the contraction operators $A$ and $A^\gamma$, corresponding with the Amari's and Abbassian's models, respectively,\\

Let $\rho > 0$ be fixed and $0\le t<\rho$. We have the following estimatives
\begin{align}
N_1:= \displaystyle\int_{\mathbb{R}^m}\left|w(x,y)\right|\left|f(u(y,t)) - f(u^\gamma(y,t))\right| \; dy &\le LC_w\Vert u - u^\gamma\Vert_{C_b(\mathbb{R}^m \times [0,\infty))}\\
N_2:= \displaystyle\int_{\mathbb{R}^m}\left|w(x,y)\right|\left|g(u^\gamma(x,t) - u^\gamma(y,t))\right| \left|f(u^\gamma(y,t))\right|\; dy &\le KC_w\Vert u - u^\gamma\Vert_{C_b(\mathbb{R}^m \times [0,\infty))}.
\end{align}

Then 
\begin{align}
\left|Au(x,t) - A^\gamma u^\gamma (x,t)\right| &\le \int_0^t  \left( \left| u(x,s) - u^\gamma(x,s) \right| + N_1 + \gamma N_2 \right) \;ds\\
&\le \rho[1 + (L + \gamma K)C_w]  \Vert u - u^\gamma\Vert_{C_b(\mathbb{R}^m \times [0,\infty))},
\end{align}
so,
\begin{align}
\label{eq:ineq}
	| u(x,t) - u^\gamma (x,t) | &\le  | u_0(x) - u_0^\gamma(x) |  + \left|Au(x,t) - A^\gamma u^\gamma (x,t)\right|\\
	&\le \Vert u_0 - u_0^\gamma\Vert_{C_b(\mathbb{R}^m)} + \rho[1 + (L + \gamma K)C_w]  \Vert u - u^\gamma\Vert_{C_b(\mathbb{R}^m \times [0,\infty))}.
\end{align}

Therefore, if $\rho$ is chosen such that $\rho[1+(L+\gamma K)C_w]<1$, we have
\begin{align}
	\Vert u - u^\gamma \Vert_\rho &\le  \frac{1}{1 - \rho[1 + (L + \gamma K)C_w]}\Vert u_0 - u_0^\gamma \Vert_{C_b(\RR^m)}.
\end{align}

Then $u^\gamma\to u$ uniformily on $\mathbb{R}^n\times[0,\rho]$ when $\gamma\to0$. Because the estimatives do not depend explicitly of time, we can iterate this procedure in $[\rho,2\rho)$ and so on, with the same $\rho$, to cover the entire interval $[0,\infty)$.

\end{proof}

\section{Well posedness in $L^1(\Omega)$}
\label{wpl}
The brain is not a continuous medium and it receives and emits no continuous signals, then is interesting to look for solutions for non continous data in a bounded spatial domain.\\

So, in this section we are going to consider the model of Abbassian et al
\begin{equation}
\label{eq:kerdynb}
\left\{ 
  \begin{array}{l l}
    u_t(x,t) = -u(x,t) + \displaystyle\int_{\Omega}{\omega_{g,\gamma}f(u(y,t))\; dy} ,&(x,t) \in \Omega\times (0,\infty)\\
     u (x,0) = u_0(x),&x \in \Omega,
      \end{array} \right. 
\end{equation}
with $u_0 \in L^1(\Omega)$ and $\Omega \subset \RR^m$ bounded.\\

Let $\rho>0$ fixed. We consider the space of continous functions with respect to $t$, defined on $[0,\rho]$ with values in $L^1(\Omega)$ and we denote that space by 
\begin{equation}
 X_{\rho}:=C([0,\rho];L^1(\Omega)).
\end{equation}

Now, a solution of \eqref{eq:kerdynb} will be a function $u\in X_{\rho}$ such that it satisfies \eqref{eq:kerdynb}.\\

The following results ensures the well posedness of the Cauchy problem \eqref{eq:kerdynb}.

\begin{lemma}
If $u$ is solution of the IVP (\ref{eq:kerdynb}), then it is bounded.
\end{lemma}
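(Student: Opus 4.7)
My plan is to use the mild (Duhamel) form of the equation together with the uniform pointwise bound on $Ju$ already established in Lemma \ref{le:1}. First I would rewrite $u_t + u = Ju$ using the integrating factor $e^{t}$ and integrate in time from $0$ to $t$, obtaining the variation-of-constants formula
\[
 u(x,t) = e^{-t}u_0(x) + \int_0^t e^{-(t-s)}(Ju)(x,s)\, ds,
\]
valid for a.e.\ $x \in \Omega$ and every $t \in [0,\rho]$.

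Next, the same computation as in Lemma \ref{le:1}, now with the integration restricted to the bounded set $\Omega$, yields $|(Ju)(x,s)| \le (1+\gamma)C_w$ for a.e.\ $x$ and every $s$; this uses only $0 \le f,g \le 1$ together with $\|w(x,\cdot)\|_{L^1_y}\le C_w$. Taking the $L^1(\Omega)$-norm in $x$ of the Duhamel identity and applying the triangle inequality together with Fubini, I obtain
\begin{align*}
\|u(\cdot,t)\|_{L^1(\Omega)} &\le e^{-t}\|u_0\|_{L^1(\Omega)} + \int_0^t e^{-(t-s)}\|(Ju)(\cdot,s)\|_{L^1(\Omega)}\, ds\\
&\le \|u_0\|_{L^1(\Omega)} + (1+\gamma)\,C_w\,|\Omega|\,(1-e^{-t}),
\end{align*}
which is a bound independent of $t \in [0,\rho]$ (and in fact valid globally, depending only on $\|u_0\|_{L^1(\Omega)}$, $\gamma$, $C_w$ and $|\Omega|$).

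The main subtlety is that $u(\cdot,t)$ lives in $L^1(\Omega)$ rather than being defined pointwise, so $(Ju)(x,s)$ has to be interpreted as the integral of the measurable function $w(x,y)[1+\gamma g(u(x,s)-u(y,s))]f(u(y,s))$; because $f$ and $g$ are bounded and continuous, this integrand is measurable in $y$ and dominated in absolute value by $(1+\gamma)|w(x,y)|$, which is integrable in $y$ and yields the required a.e.\ pointwise bound on $Ju$. Once that point is settled, the estimate above is a direct, Gronwall-free consequence of Lemma \ref{le:1}, so I do not expect any real obstacle beyond this measurability bookkeeping.
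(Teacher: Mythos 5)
Your proof is correct and takes essentially the same route as the paper's: the variation-of-constants (Duhamel) formula, the pointwise bound $|(Ju)(x,s)|\le(1+\gamma)C_w$ from Lemma \ref{le:1}, and then the $L^1(\Omega)$ norm with the triangle inequality. Your constant $(1+\gamma)C_w|\Omega|$ is in fact slightly more careful than the paper's $C_w|\Omega|$, which silently drops the factor $1+\gamma$, and your remark on the measurability of the integrand is a point the paper glosses over.
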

\begin{proof}
For a solution $u$ of (\ref{eq:kerdynb}) we have that
\begin{equation}
\label{variation}
u = e^{-t}u_0 + \int_0^t e^{s -t} (Ju)(x,s) ds,
\end{equation}
then
\begin{align}
\Vert u(\cdot,t)\Vert_{L^1(\Omega)} &\le  e^{-t}\Vert u_0(x)\Vert_{L^1(\Omega)} + \int_0^t e^{s-t}\left \Vert Ju\right\Vert_{L^1(\Omega)} \; ds,\\
&\le  e^{-t}\Vert u_0(x)\Vert_{L^1(\Omega)} + C_w |\Omega| (1 - e^-t),\\
&\le  \Vert u_0(x)\Vert_{L^1(\Omega)} + C_w |\Omega|,
\end{align}
so,
\begin{equation}
\sup_t  \Vert u(\cdot,t)\Vert_{L^1(\Omega)} \le \infty.
\end{equation}
\end{proof}
\begin{theorem}
Let $u_0 \in L^1(\Omega)$. The operator $A:X_\rho \rightarrow X_\rho $, as defined in \eqref{eq:op3}, is well defined.
\end{theorem}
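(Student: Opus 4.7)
The plan is to verify the two defining properties of membership in $X_\rho = C([0,\rho]; L^1(\Omega))$: first, that $(Au)(\cdot,t) \in L^1(\Omega)$ for every $t \in [0,\rho]$, and second, that the map $t \mapsto (Au)(\cdot,t)$ is continuous into $L^1(\Omega)$. Since $(Au)(x,t) = \int_0^t (Fu)(x,s)\,ds$ with $Fu = -u + Ju$, the whole argument reduces to showing that $s \mapsto (Fu)(\cdot,s)$ is a bounded, $L^1(\Omega)$-valued continuous function on $[0,\rho]$, after which Bochner-type estimates handle the time integral.

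The first step is the pointwise-in-$t$ integrability of $Ju(\cdot,t)$. Because $f,g$ take values in $[0,1]$, we have $|1+\gamma g(u(x,t)-u(y,t))|\le 1+\gamma$ and $|f|\le 1$, so
\begin{equation*}
|(Ju)(x,t)| \le (1+\gamma)\int_\Omega |w(x,y)|\,dy \le (1+\gamma)C_w
\end{equation*}
by the synaptic kernel hypothesis. Since $\Omega$ is bounded, $Ju(\cdot,t) \in L^\infty(\Omega) \subset L^1(\Omega)$ with $\|Ju(\cdot,t)\|_{L^1(\Omega)} \le (1+\gamma)C_w|\Omega|$. Together with $u(\cdot,t) \in L^1(\Omega)$, this yields $Fu(\cdot,t) \in L^1(\Omega)$ uniformly bounded in $s\in[0,\rho]$.

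Next I establish continuity of $s\mapsto (Fu)(\cdot,s)$ in $L^1(\Omega)$. Continuity of $u(\cdot,s)$ is given, so it suffices to treat $Ju$. Fix $s_0$ and let $s_n\to s_0$. Using the Lipschitz constants $L$ and $K$ of $f$ and $g$ from Lemma~\ref{lemmaF} / Theorem~\ref{contraction} and adding/subtracting a mixed term, I estimate
\begin{align*}
|(Ju)(x,s_n) - (Ju)(x,s_0)| &\le \int_\Omega |w(x,y)|\,|f(u(y,s_n))-f(u(y,s_0))|\,dy\\
&\quad + \gamma\int_\Omega |w(x,y)|\,|g(\cdots_n)-g(\cdots_0)|\,dy\\
&\quad + \gamma\int_\Omega |w(x,y)|\,|f(u(y,s_n))-f(u(y,s_0))|\,dy,
\end{align*}
and then, using $|f(a)-f(b)|\le L|a-b|$, $|g(a)-g(b)|\le K|a-b|$, together with Fubini and $\|w\|_{L^\infty_x L^1_y}\le C_w$ and $\|w\|_{L^\infty}\le C_\infty$, conclude
\begin{equation*}
\|Ju(\cdot,s_n)-Ju(\cdot,s_0)\|_{L^1(\Omega)} \le C(\gamma,L,K,C_w,C_\infty,|\Omega|)\,\|u(\cdot,s_n)-u(\cdot,s_0)\|_{L^1(\Omega)},
\end{equation*}
which tends to zero by continuity of $u$ in $X_\rho$. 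Hence $s\mapsto Fu(\cdot,s)\in L^1(\Omega)$ is continuous.

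Finally, for $0\le t_1\le t_2\le \rho$,
\begin{equation*}
\|Au(\cdot,t_2)-Au(\cdot,t_1)\|_{L^1(\Omega)} \le \int_{t_1}^{t_2} \|Fu(\cdot,s)\|_{L^1(\Omega)}\,ds \le M|t_2-t_1|,
\end{equation*}
where $M := \sup_{s\in[0,\rho]}\|Fu(\cdot,s)\|_{L^1(\Omega)} \le \sup_s \|u(\cdot,s)\|_{L^1(\Omega)} + (1+\gamma)C_w|\Omega|$ is finite by continuity on the compact interval $[0,\rho]$. This simultaneously gives $(Au)(\cdot,t)\in L^1(\Omega)$ (comparing with $t_1=0$) and the required continuity of $t\mapsto (Au)(\cdot,t)$.

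The main obstacle is the continuity estimate for $Ju$: one must show that the $x$-dependence inside $g(u(x,s)-u(y,s))$ does not spoil the $L^1$-continuity when $u$ only lives in $L^1(\Omega)$. This is where the Lipschitz property of $g$ combined with Fubini and the kernel bound $\|w\|_{L^\infty_x L^1_y}\le C_w$ is essential; it is the only place where a mere $L^1$ hypothesis on $u$ could have caused trouble.
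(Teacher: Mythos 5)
Your proof is correct and follows essentially the same route as the paper: the uniform bound $\Vert Ju(\cdot,t)\Vert_{L^1(\Omega)}\le(1+\gamma)C_w|\Omega|$ obtained from $0\le f,g\le1$ and the kernel hypothesis, hence $Fu(\cdot,t)\in L^1(\Omega)$, followed by a Lipschitz-in-$t$ estimate for $Au$ exactly as in Lemma~\ref{lemmawell}. Your intermediate step establishing $L^1$-continuity of $s\mapsto Fu(\cdot,s)$ is sound (and correctly notes that the $x$-dependence inside $g$ forces you to use both $C_w$ and $C_\infty|\Omega|$), but it is not actually needed for the final conclusion, since the constant $M$ already follows from $\sup_s\Vert u(\cdot,s)\Vert_{L^1(\Omega)}<\infty$ and the uniform bound on $Ju$.
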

\begin{proof}
Let $0\le t < \rho$, $u \in X_\rho$. Then, we have the following estimate for the operator $J$:
\begin{align}
\Vert (Ju)(\cdot, t) \Vert_{L^1(\Omega)} &= \int_{\Omega}\left\vert \int_{\Omega}w(x,y)[1 + \gamma g(u(x,t) - u(y,t))]f(u(y,t)) \;dy \right\vert \;dx\\
&\le \int_{\Omega}\int_{\Omega}\left\vert w(x,y)[1 + \gamma g(u(x,t) - u(y,t))]f(u(y,t))  \right\vert \;dy \;dx\\
&\le \int_{\Omega}\int_{\Omega}\left\vert w(x,y) \right\vert \;dy \;dx\\
&\le C_w |\Omega|,
\end{align}
where $|\Omega|$ is the measure of $\Omega$.\\

Since $ u(\cdot,t) \in L^1(\Omega)$ we have $(Fu)(\cdot,t) \in L^1(\Omega)$, so $(Au)(\cdot,t) \in L^1(\Omega)$ by the application of Fubini's theorem. The $(Au)(x,\cdot)$ can be shown to be continuous on $[0,\rho]$  in an analogous manner as was done in the lemma  \ref{lemmawell}.
\end{proof}
\begin{theorem}
\label{thmbound}
Let $u, v \in X_\rho$ are solutions to \eqref{eq:kerdynb}. Then there exists a constant $C = C(\Omega,w,g,f, \gamma)$ such that
\begin{equation}
\Vert Au - Av\Vert_\rho \le \rho C\Vert u - v \Vert_\rho
\end{equation}
\end{theorem}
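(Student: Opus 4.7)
The plan is to mimic the strategy of Theorem \ref{contraction}: decompose $A = A_1 + A_2 + A_3$ as in \eqref{eq:A1}--\eqref{eq:A3}, bound each of the three pieces in the $X_\rho=C([0,\rho];L^1(\Omega))$ norm by applying Fubini to swap the $x$- and $y$-integrals, and add the resulting estimates.

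For $A_1$ the bound $\Vert A_1 u - A_1 v\Vert_\rho \le \rho \Vert u - v\Vert_\rho$ follows at once from Minkowski's integral inequality. For $A_2$ I would use the Lipschitz property of $f$ (constant $L$) to bound the integrand pointwise by $L|w(x,y)|\,|u(y,s)-v(y,s)|$, then integrate in $x$ over $\Omega$ and exchange the order of integration. The key observation here is that the synaptic kernel definition only supplies $\sup_x\int_\Omega |w(x,y)|\,dy \le C_w$; the symmetric estimate is not directly available, but since $\Omega$ is bounded and $\|w\|_{L^\infty}\le C_\infty$, we have $\int_\Omega |w(x,y)|\,dx \le C_\infty|\Omega|$, which is enough to close the $A_2$ estimate and yields a bound of the form $\Vert A_2 u - A_2 v\Vert_\rho \le \rho L C_\infty|\Omega|\,\Vert u - v\Vert_\rho$.

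The main work is in $A_3$. The standard telescoping
\[
g(u_1-u_2)f(u_2) - g(v_1-v_2)f(v_2) = g(u_1-u_2)[f(u_2)-f(v_2)] + [g(u_1-u_2)-g(v_1-v_2)]f(v_2),
\]
applied with $u_1=u(x,s)$, $u_2=u(y,s)$, $v_1=v(x,s)$, $v_2=v(y,s)$, together with $|f|,|g|\le 1$ and the Lipschitz constants $L,K$, reduces the pointwise integrand to a sum of terms of the form $|w(x,y)|\,|u(y,s)-v(y,s)|$ and $|w(x,y)|\,|u(x,s)-v(x,s)|$. After integration in $x$ the first type is handled by the $C_\infty|\Omega|$ bound above (via Fubini in the order $dx\,dy$), while the second type is controlled directly by $\|w\|_{L^\infty_xL^1_y}\le C_w$ in the native order $dy\,dx$, without appealing to the compactness of $\Omega$. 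Integration in $s\in[0,t]$ with $t\le\rho$ then yields a bound of the form $\Vert A_3 u - A_3 v\Vert_\rho \le \gamma\rho\bigl[(L+K)C_\infty|\Omega| + K C_w\bigr]\Vert u - v\Vert_\rho$.

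Summing the three estimates gives the claim with the explicit constant $C = 1 + L C_\infty|\Omega| + \gamma\bigl[(L+K)C_\infty|\Omega| + K C_w\bigr]$, which depends only on $\Omega,w,f,g,\gamma$ as required. The only place where the $L^1$ theory really differs from the $C_b$ argument of Theorem \ref{contraction}—and so the main obstacle—is handling the $g$-term in $A_3$: its Lipschitz estimate produces a contribution $|u(x,s)-v(x,s)|$ that is attached to the same variable we want to integrate first, and the asymmetric integrability hypothesis on $w$ forces us to also invoke the $L^\infty$-bound on $w$ over the bounded domain $\Omega$.
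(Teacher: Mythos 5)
Your proposal is correct and follows essentially the same route as the paper: the same decomposition $A=A_1+A_2+A_3$, the same telescoping of the $g\cdot f$ product, and the same Fubini swap to separate the $|u(y,s)-v(y,s)|$ and $|u(x,s)-v(x,s)|$ contributions. The only difference is bookkeeping: you correctly distinguish the bound $\int_\Omega|w(x,y)|\,dx\le C_\infty|\Omega|$ (needed after swapping the order of integration) from the hypothesis $\sup_x\int_\Omega|w(x,y)|\,dy\le C_w$, whereas the paper writes $C_w|\Omega|$ uniformly for all these terms; your constant is therefore slightly different but the argument is the same.
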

\begin{proof}
Let us divide the operator $A$ in three operators $A_1$, $A_2$ and $A_3$, as previously (eqs \eqref{eq:A1}, \eqref{eq:A2}, \eqref{eq:A3}), such that: $A = A_1 + A_2 + A_3$.\\

We have the following estimate for the linear operator $A_1$
\begin{align}
\Vert (A_1u)(\cdot,t) - (A_1v)(\cdot,t) \Vert_{L^1(\Omega)} & \le \rho \Vert u - v \Vert_\rho.
\end{align}
For $A_2$, we obtain the following estimate
\begin{align}
\Vert (A_2u)(\cdot,t) - (A_2v)(\cdot,t) \Vert_{L^1(\Omega)} &\le  \int_0^t \int_{\Omega}\int_{\Omega} \left|w(x,y)\right| \left|f(u(y,s)) - f(v(y,s))\right| \; dy \; dx \;ds,\\
&\le  L  \int_0^t \int_{\Omega} \left|u(y,s) - v(y,s)\right| \int_{\Omega} \left|w(x,y)\right| \; dx \; dy \; ds,\\
&\le \rho L C_w |\Omega| \left\|u - v\right\|_\rho.
\end{align}
Finally, we can obtain an estimate for $A_3$:
\begin{align}
\Vert (A_3u)(\cdot,t) - (A_3v)(\cdot,t) \Vert_{L^1(\Omega)} &\le \gamma \int_{\Omega}  \int_0^t \int_{\Omega} \left|w(x,y)\right|\left|g(u(x,s) - u(y,s))f(u(y,s)) - \right.\notag \\
&\left. \mathrel{\phantom{=}}  g(v(x,s) - v(y,s))f(v(y,s))\right.|\; dy \; ds  \; dx\\
&= \gamma \int_0^t \int_{\Omega}  \int_{\Omega} \left|w(x,y)\right|\left|g(u(x,s) - u(y,s))f(u(y,s)) - \right.\notag \\
&\left. \mathrel{\phantom{=}}  g(v(x,s) - v(y,s))f(v(y,s))\right.|\; dy  \; dx \;ds\\
&= \gamma \int_0^t \int_{\Omega} \int_{\Omega} \left|w(x,y)\right|\left|g(u(x,s) - u(y,s))f(u(y,s)) - \right.\notag\\
&\left. \mathrel{\phantom{=}} g(v(x,s) - v(y,s))f(u(y,s)) + g(v(x,s) - v(y,s))f(u(y,s))-\right. \notag\\
&\left. \mathrel{\phantom{=}}  g(v(x,s) - v(y,s))f(v(y,s))\right.|\; dy  \; dx \; ds\\
&\le \gamma \left( \int_0^t \int_{\Omega} \int_{\Omega} \left|w(x,y)\right|\big|g(u(x,s) - u(y,s))- \right.\notag\\
& \left.\mathrel{\phantom{=}}g(v(x,s) - v(y,s))\big|\; dy  \; dx \; ds +  \right.\notag\\
& \left.\mathrel{\phantom{=}} \int_0^t\int_{\Omega} \int_{\Omega} \left|w(x,y)\right|\big|f(u(y,s)) - f(v(y,s))\big|\; dy  \; dx \;ds \right)\\
&\le \gamma \left(K \int_0^t \int_{\Omega} \int_{\Omega} \left|w(x,y)\right|\big|u(x,t) - v(x,t)\big|\; dy  \; dx \;ds +  \right.\notag\\
& \left.\mathrel{\phantom{=}} (L + K)\int_0^t \int_{\Omega} \int_{\Omega} \left|w(x,y)\right|\big|u(y,t) - v(y,t)\big|\; dy  \; dx \;ds \right)\\
&\le \rho\gamma K C_w |\Omega| \Vert u - v \Vert_\rho +  \rho\gamma(L+K) C_w |\Omega|\Vert u - v \Vert_\rho\\
&= \rho\gamma (L + 2K) C_w |\Omega| \Vert u - v \Vert_\rho.
\end{align}
Therefore
\begin{align}
\Vert Au - Av \Vert_{\rho} &\le \rho \Vert u - v \Vert_{\rho} +  \rho L C_w |\Omega| \left\|u - v\right\|_{\rho} + \rho\gamma (L + 2K) C_w |\Omega| \Vert u - v \Vert_{\rho}\\
&= \rho[1 + C_w|\Omega|(L + \gamma (L + 2K) )]\Vert u - v \Vert_{\rho}\\
&=\rho C\Vert u - v \Vert_{\rho},
\end{align}
with $C:=1 + C_w|\Omega|(L + \gamma (L + 2K) )$ independent of the time.
\end{proof}

\begin{theorem}[Existence and uniqueness in $L^1(\Omega)$]
For all $u_0 \in L^1(\Omega)$, there exists a unique solution $u \in C[[0,\infty);L^1(\Omega)]$ for the IVP \eqref{eq:kerdynb}.
\end{theorem}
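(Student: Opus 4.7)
The plan is to mirror the Banach fixed point strategy used for the $C_b(\mathbb{R}^n)$ case in Theorems \ref{thmexistence} and \ref{globalc}, now exploiting the pieces assembled for the $L^1(\Omega)$ setting: the a priori bound lemma, well-definedness of $A$ on $X_\rho = C([0,\rho];L^1(\Omega))$, and the Lipschitz estimate from Theorem \ref{thmbound}.

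First, I would note that $X_\rho$ is a Banach space and that the integral form $u = u_0 + Au$ is equivalent to the IVP \eqref{eq:kerdynb} (an $L^1$ analogue of Lemma \ref{lemavolterra} follows from differentiating the Volterra equation, using that $Fu$ is $L^1$-continuous in $t$). I would then choose $\rho > 0$ small enough that the constant $\rho C$ from Theorem \ref{thmbound}, with $C = 1 + C_w|\Omega|(L + \gamma(L+2K))$, satisfies $\rho C < 1$. Since $C$ depends only on $\Omega, w, f, g, \gamma$ and not on the initial datum or on time, this $\rho$ is uniform. The map $\tilde{A} u := u_0 + Au$ then maps $X_\rho$ to itself and is a strict contraction, so Banach's fixed point theorem yields a unique $u^\ast \in X_\rho$ with $u^\ast = u_0 + Au^\ast$, i.e.\ a unique local solution on $[0,\rho]$.

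To globalize, I would iterate: take $u(\cdot,\rho) \in L^1(\Omega)$ as new initial datum and reapply the local existence result on $[\rho,2\rho]$ with the same $\rho$, glue the pieces together, and continue. Because the contraction constant $\rho C$ is independent of the initial datum, no shrinking of $\rho$ is forced at each step, and the procedure covers $[0,\infty)$ in countably many steps; the bound from the preceding lemma guarantees each restarted datum indeed lies in $L^1(\Omega)$, so there is no blow-up preventing the iteration. Uniqueness on $[0,\infty)$ follows from uniqueness on each subinterval combined with continuity at the gluing points.

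The main obstacle, and really the only subtlety beyond the $C_b$ case, is justifying the continuity in $t$ of the fixed point at the gluing endpoints and confirming that the Volterra-to-ODE equivalence survives in the $L^1$ framework: one must check that $\int_0^t (Fu)(\cdot,s)\,ds$ is differentiable in $t$ as a Bochner integral with values in $L^1(\Omega)$, which reduces to the continuity of $s \mapsto (Fu)(\cdot,s)$ into $L^1(\Omega)$ already recorded in the previous theorem. Given that, the iterative extension is routine and the proof concludes with a one-line appeal to Theorem \ref{thmbound} plus the Banach fixed point theorem on each successive interval.
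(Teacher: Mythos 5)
Your proposal is correct and follows essentially the same route as the paper: choose $\rho$ so that the contraction constant $\rho C$ from Theorem \ref{thmbound} is less than $1$, apply the Banach fixed point theorem to $\tilde{A}u = u_0 + Au$ on $X_\rho = C([0,\rho];L^1(\Omega))$, and iterate on $[\rho,2\rho]$, $[2\rho,3\rho]$, \dots using that the estimate is independent of the initial datum. Your additional remarks on the $L^1$ analogue of the Volterra--ODE equivalence and the continuity at the gluing points address details the paper leaves implicit, but they do not change the argument.
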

\begin{proof}
Choose $\rho$ as in theorem \ref{thmbound} such that
\begin{equation}
C := \rho[1 + C_w|\Omega|(L + \gamma (L + 2K) )] < 1.
\end{equation}
Then the operator $A$ is a contraction in the Banach space $X_\rho$. The existence and uniqueness follows from the Banach Fixed Point Theorem applyied to the operator $\tilde{A}:X_\rho \rightarrow X_\rho$ given by $\tilde{A} = u_0 + A$, i.e.
\begin{equation}
(\tilde{A}u)(x,t) = u_0(x) + (Au)(x,t),
\end{equation}
which is also a contraction due to $A$. Now, this local result can be extended to the whole interva $[0,\infty)$ noting that the estimatives just obtained are the same and independent of initial data. For initial data $u(x, \rho) \in L^1(\Omega)$, a similar result can be obtained for $[\rho, 2\rho]$, and so on.
\end{proof}

\begin{theorem}[continuous dependence on initial data]
Let $u$ and $v$ be solutions to \ref{eq:kerdynb}, with initial data $u_0$ and $v_0$ respectively. Then, for a fixed $\gamma$, there exists $\rho > 0$ and a constant $C = C(\rho)$ such that
\begin{equation}
\Vert u - v\Vert_\rho \le C\Vert u_0 - v_0 \Vert_{L^1(\Omega)}
\end{equation}
\end{theorem}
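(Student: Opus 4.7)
The plan is to mimic the proof of Theorem \ref{dependence1} (the $C_b$ version), replacing the contraction estimate of Theorem \ref{contraction} with the $L^1$ contraction estimate just obtained in Theorem \ref{thmbound}. Since solutions of \eqref{eq:kerdynb} are equivalently fixed points of $\tilde A u = u_0 + Au$ on $X_\rho = C([0,\rho];L^1(\Omega))$, the natural starting point is the integral identity
\begin{equation*}
u - v = (u_0 - v_0) + (Au - Av),
\end{equation*}
where both sides live in $X_\rho$ and the first term is interpreted as the constant-in-time function $t \mapsto u_0 - v_0$, whose $X_\rho$ norm is exactly $\|u_0 - v_0\|_{L^1(\Omega)}$.

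First I would fix $\gamma$ and choose $\rho > 0$ small enough that
\begin{equation*}
q := \rho\bigl[1 + C_w|\Omega|\bigl(L + \gamma(L+2K)\bigr)\bigr] < 1,
\end{equation*}
which is possible because the bracketed constant is independent of $\rho$ and of the initial data. Then Theorem \ref{thmbound} gives $\|Au - Av\|_\rho \le q\,\|u-v\|_\rho$ for any two solutions $u,v \in X_\rho$. Taking norms in the identity above and applying the triangle inequality yields
\begin{equation*}
\|u - v\|_\rho \le \|u_0 - v_0\|_{L^1(\Omega)} + q\,\|u - v\|_\rho.
\end{equation*}
Since $q < 1$, rearranging produces the desired bound with $C = 1/(1-q)$.

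There is no substantive obstacle here: the estimate is a direct Lipschitz-on-the-initial-data consequence of the contraction property, exactly as in Theorem \ref{dependence1}. The only point worth being careful about is that Theorem \ref{thmbound} was stated for two solutions of \eqref{eq:kerdynb}, so its hypothesis is satisfied verbatim; no additional regularity of $u$ or $v$ needs to be assumed. If one wanted a version valid on every interval $[0,T]$ (as in Theorem \ref{dependence1}), one would iterate the argument on successive intervals $[k\rho,(k+1)\rho]$ with the same $\rho$, obtaining a constant $C(T)$ that grows like $(1-q)^{-T/\rho}$; but the statement as formulated only asks for existence of one such $\rho$ and $C(\rho)$, so the single-step argument above suffices.
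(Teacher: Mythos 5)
Your proposal is correct and follows essentially the same route as the paper, which simply states that the result is a consequence of the contraction property established in Theorem \ref{thmbound}, argued exactly as in Theorem \ref{dependence1}: write $u - v = (u_0 - v_0) + (Au - Av)$, apply the triangle inequality and the contraction bound with $q<1$, and rearrange to get $C = 1/(1-q)$. Your added remarks on the choice of $\rho$ and on iterating over successive intervals are consistent with what the paper does elsewhere and introduce no new issues.
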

\begin{proof}
This is consequence of the the operator being a contraction, similar as in theorem \ref{dependence1}.
\end{proof}

\section{Stationary solutions}

The following results guaranties the existence of stationary solutions in a general way, improving results obtained in \cite{abbassian2012neural}
\begin{lemma}\label{L:Stationary}
 Suppose that $u^\gamma=u^\gamma(x,t)$ is a global solution of \eqref{E:PVI1} in $X_\rho=C([0,+\infty);C_b(\Omega))$. Then the family $\{u^\gamma(\cdot,t)\}_{t\ge0}$ is equicontinous in $C_b(\Omega)$ if $\gamma$ is small enough and $\Omega$ is compact. In this case, we can choose a sequence of times $t_1<t_2<\dots,\; t_n\to +\infty$; such that $\{u^\gamma(\cdot,t_n)\}_{n\in\mathbb{N}}$ converges uniformily to a function $u_{\infty}=u_{\infty}(x)$ in $C(\Omega)$.
\end{lemma}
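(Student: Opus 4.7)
The plan is to establish uniform-in-$t$ equicontinuity of the trajectory $\{u^\gamma(\cdot,t)\}_{t\ge 0}$ in $C(\Omega)$ (which equals $C_b(\Omega)$ since $\Omega$ is compact), combine it with the $L^\infty$ bound already proved in Lemma \ref{global}, and then invoke Arzelà--Ascoli on the compact metric space $\Omega$. The starting point is the variation-of-parameters rewriting of \eqref{E:PVI1}, obtained by multiplying the equation by $e^t$ and integrating:
\[
u^\gamma(x,t) = e^{-t}u_0(x) + \int_0^t e^{s-t}(Ju^\gamma)(x,s)\,ds.
\]

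The second step is to extract a spatial-Lipschitz estimate for $J$ from the three-term decomposition used in the proof of Lemma \ref{lemmaF}. Using $f,g\le 1$, the Lipschitz property \eqref{e9} of the kernel, the Lipschitz constant $K$ of $g$, and the bound $\|w(\tilde x,\cdot)\|_{L^1}\le C_w$, one checks that
\[
|(Ju^\gamma)(x,s)-(Ju^\gamma)(\tilde x,s)| \le (1+\gamma)K_w|x-\tilde x| + \gamma K C_w\,|u^\gamma(x,s)-u^\gamma(\tilde x,s)|.
\]
Inserting this into the integral representation and defining the modulus $\Phi_\delta(t):=\sup_{|x-\tilde x|\le\delta}|u^\gamma(x,t)-u^\gamma(\tilde x,t)|$ yields
\[
\Phi_\delta(t)\le e^{-t}\omega_0(\delta) + (1+\gamma)K_w\delta + \gamma K C_w\int_0^t e^{s-t}\Phi_\delta(s)\,ds,
\]
where $\omega_0$ is the modulus of continuity of $u_0$, which is well defined and tends to $0$ at $0$ because $\Omega$ is compact.

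The crux of the argument is a bootstrap that keeps the bound uniform in $t$. By Lemma \ref{global}, $\Phi_\delta(s)\le 2C$ for all $s$, so $\Psi_\delta:=\sup_{t\ge 0}\Phi_\delta(t)$ is finite a priori. Bounding the integral by $\Psi_\delta(1-e^{-t})\le \Psi_\delta$ and then taking $\sup_t$ gives
\[
\Psi_\delta \le \omega_0(\delta) + (1+\gamma)K_w\delta + \gamma K C_w\,\Psi_\delta.
\]
If $\gamma$ is small enough that $\gamma K C_w<1$, one can absorb the $\Psi_\delta$ term on the right to obtain
\[
\Psi_\delta \le \frac{\omega_0(\delta)+(1+\gamma)K_w\delta}{1-\gamma K C_w}\xrightarrow[\delta\to 0]{}0,
\]
which is exactly the required equicontinuity of $\{u^\gamma(\cdot,t)\}_{t\ge 0}$ in $C(\Omega)$.

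Finally, combining equicontinuity with the uniform boundedness from Lemma \ref{global} and compactness of $\Omega$, the Arzelà--Ascoli theorem implies that $\{u^\gamma(\cdot,t)\}_{t\ge 0}$ is relatively compact in $C(\Omega)$. Picking any sequence $t_n\to+\infty$ (for instance $t_n=n$) and extracting a uniformly convergent subsequence, relabelled as $t_n$, produces the desired $u_\infty\in C(\Omega)$. The main obstacle is precisely the uniform-in-$t$ bootstrap: a direct Grönwall step on the integral inequality for $\Phi_\delta(t)$ would produce an $e^{\gamma K C_w\,t}$ factor that ruins uniformity in $t$, so one genuinely needs the a priori $L^\infty$ bound together with the smallness of $\gamma$ to close the estimate at the level of the global supremum $\Psi_\delta$.
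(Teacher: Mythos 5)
Your proof is correct and follows essentially the same route as the paper: the variation-of-parameters representation, the three-term decomposition of $(Ju)(x,\cdot)-(Ju)(\tilde x,\cdot)$ with the $\gamma$-term absorbed for $\gamma$ small, and Arzel\`a--Ascoli on the compact domain. Your formulation via $\Phi_\delta$ and $\Psi_\delta$, with the a priori bound from Lemma \ref{global} guaranteeing $\Psi_\delta<\infty$ before the absorption, makes rigorous a step the paper performs only implicitly, but it is the same argument.
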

\begin{proof}
A global solution of \eqref{E:PVI1} is
\begin{equation}
u(x,t)=e^{-t}u_0(x)+\int_0^t e^{s -t}(Ju)(x,s)ds, 
\end{equation}
so
\begin{align}
|u(x,t)-u(\tilde{x},t)| &\le|u_0(x)-u_0(\tilde{x})|+\int_0^t e^{s-t}\left |(Ju)(x,s)-(Ju)(\tilde{x},s)\right|ds,\\
&\le|u_0(x)-u_0(\tilde{x})|+\Vert (Ju)(x,\cdot)-(Ju)(\tilde{x},\cdot) \Vert_{\infty}.
\end{align}
But
\begin{equation}
|(Ju)(x,t) - (Ju)(\tilde{x},t)|\le  I_1 + I_2 + I_3,
\end{equation}
whit
\begin{equation}
I_1\le k_w |x - \tilde{x}|,
\end{equation}
\begin{equation}
I_2\le \gamma k_w |x - \tilde{x}|
\end{equation}
and
\begin{equation}
I_3=\gamma\int_{\mathbb{R}^m}|w(\tilde{x},y)[g(u(x,t)- u(y,t)) - g(u(\tilde{x},t)- u(y,t))]||f(u(y,t))|dy.
\end{equation}

As $g(\delta)=e^{-\delta^2}$ es a contraction, then
\begin{equation}
I_3\le \gamma C_w |u(x,t)-u(\tilde{x},t)|,
\end{equation}
therefore
\begin{equation}
 \Vert u(x,\cdot)-u(\tilde{x},\cdot)\Vert_{\infty}\le |u_0(x)-u_0(\tilde{x})|+k_w |x - \tilde{x}|+\gamma k_w |x - \tilde{x}|+\gamma C_w \Vert u(x,\cdot)-u(\tilde{x},\cdot)\Vert_{\infty}.
\end{equation}

If $\Omega$ is compact then $u_0$ is uniformly continuous, so we can choose $\gamma$ small enough such that $\{u^\gamma(\cdot,t)\}_{t\ge0}$ is equicontinous. Since this family is bounded, the Arzela theorem guarantees a sequence $\{u^\gamma(\cdot,t_n)\}_{n\in\mathbb{N}}$ convergent uniformily to a function $u_{\infty}=u_{\infty}(x)$ in $C(\Omega)$.
\end{proof}

\begin{lemma}
Suppose that $\varphi_n\to \varphi$ uniformily in $S$. Let $x$ a limit point of $S$ and suppose that $\lim_{t\to x}\varphi_n(t)=a_n$. Then $\{a_n\}_n$ is convergent and $\lim_{n\to\infty}a_n=\lim_{t\to x}\varphi(t)$, that is
\begin{equation}
 \lim_{n\to\infty}\lim_{t\to x}\varphi_n(t)=\lim_{t\to x}\lim_{n\to\infty}\varphi_n(t).
\end{equation}
\end{lemma}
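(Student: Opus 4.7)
The plan is to execute the classical interchange-of-limits argument (essentially Rudin's Theorem~7.11) in two stages: first prove that $\{a_n\}$ converges by a Cauchy argument, then prove that its limit coincides with $\lim_{t\to x}\varphi(t)$ via a three-epsilon estimate.

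For the first stage, I would start from the uniform convergence hypothesis: given $\varepsilon>0$, choose $N$ so that $\|\varphi_n-\varphi_m\|_{\infty,S}<\varepsilon$ for all $n,m\ge N$. This means $|\varphi_n(t)-\varphi_m(t)|<\varepsilon$ for every $t\in S$. Since $x$ is a limit point of $S$ and each $\varphi_n(t)\to a_n$ as $t\to x$, I can pass to the limit $t\to x$ in this inequality to obtain $|a_n-a_m|\le\varepsilon$. Hence $\{a_n\}$ is Cauchy in $\mathbb{R}$, and so it converges to some number $a$.

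For the second stage, I would show $\lim_{t\to x}\varphi(t)=a$ by the standard three-term split
\begin{equation}
|\varphi(t)-a|\le |\varphi(t)-\varphi_n(t)|+|\varphi_n(t)-a_n|+|a_n-a|.
\end{equation}
Given $\varepsilon>0$, I first fix $n$ large enough that the first term is less than $\varepsilon/3$ for every $t\in S$ (uniform convergence) and simultaneously the third term is less than $\varepsilon/3$ (since $a_n\to a$). With this $n$ fixed, the definition of $a_n=\lim_{t\to x}\varphi_n(t)$ supplies a neighborhood of $x$ in $S$ on which the middle term is below $\varepsilon/3$. Summing gives $|\varphi(t)-a|<\varepsilon$ for all $t$ in that neighborhood, which is exactly $\lim_{t\to x}\varphi(t)=a=\lim_{n\to\infty}a_n$.

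There is no real obstacle here; the only delicate point is the passage to the limit $t\to x$ inside the Cauchy inequality, which is legitimate because $x$ is a limit point of $S$ and the inequality holds for all $t\in S$ with constants independent of $t$. No additional hypotheses on $S$ (beyond $x$ being a limit point) or on the $\varphi_n$ (beyond uniform convergence and existence of the one-sided limits $a_n$) are needed, and the target space being $\mathbb{R}$ guarantees completeness so that the Cauchy sequence $\{a_n\}$ indeed converges.
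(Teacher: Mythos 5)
Your proof is correct and complete: the Cauchy argument for $\{a_n\}$ followed by the three-epsilon estimate is exactly the classical interchange-of-limits argument (Rudin's Theorem~7.11). The paper itself states this lemma without proof, remarking only that it is a well known result from advanced calculus, so your write-up supplies precisely the standard argument the authors are implicitly invoking; there is nothing to compare beyond noting that your treatment of the only delicate step --- passing to the limit $t\to x$ inside the uniform Cauchy inequality, which requires $x$ to be a limit point of $S$ --- is handled correctly.
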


\begin{lemma}
Suppose that $\varphi_n\to \varphi$ uniformily in $S$ and $\{\varphi_n\}_{n\in\mathbb{N}}$ is uniformily bounded, that is, there is a $M>0$ such that $|\varphi_n(x)|\le M\;\text{for all x}\;\text{in}\; S,n\in\mathbb{N}$. If $\psi$ is a continuous function on the closed the ball $\bar{B}(0;M)$, then $(\psi\circ \varphi_n)\to (\psi\circ \varphi)$ uniformily in $S.$  
\end{lemma}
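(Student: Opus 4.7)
The plan is to reduce the problem to uniform continuity of $\psi$ on the compact set $\bar{B}(0;M)$, and then exploit uniform convergence of $\varphi_n\to\varphi$ to transfer smallness of the argument to smallness of the image.

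First, I would observe that the hypothesis $|\varphi_n(x)|\le M$ for every $n$ and every $x\in S$ passes to the limit: since $\varphi_n(x)\to\varphi(x)$ (pointwise, as uniform convergence implies pointwise), we get $|\varphi(x)|\le M$ as well. Hence both $\varphi_n(x)$ and $\varphi(x)$ lie in $\bar{B}(0;M)$, so the compositions $\psi\circ\varphi_n$ and $\psi\circ\varphi$ are well defined on all of $S$, and the values $\psi(\varphi_n(x))-\psi(\varphi(x))$ only depend on what $\psi$ does on $\bar{B}(0;M)$.

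Next, since $\psi$ is continuous on the compact set $\bar{B}(0;M)\subset\RR$, it is uniformly continuous there. Given $\varepsilon>0$, I would choose $\delta>0$ such that whenever $s,t\in\bar{B}(0;M)$ satisfy $|s-t|<\delta$, one has $|\psi(s)-\psi(t)|<\varepsilon$. The uniform convergence $\varphi_n\to\varphi$ on $S$ now supplies an $N=N(\delta)$ with $|\varphi_n(x)-\varphi(x)|<\delta$ for all $n\ge N$ and all $x\in S$ simultaneously. Combining the two bounds yields
\[
|\psi(\varphi_n(x))-\psi(\varphi(x))|<\varepsilon \qquad \text{for all } n\ge N,\; x\in S,
\]
which is precisely the uniform convergence $\psi\circ\varphi_n\to\psi\circ\varphi$ on $S$.

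There is really no serious obstacle here; the only point to be careful about is that $\delta$ comes from uniform (not merely pointwise) continuity of $\psi$, which in turn requires that the arguments stay inside the compact ball $\bar{B}(0;M)$ — that is exactly what the uniform boundedness hypothesis $|\varphi_n|\le M$ (together with its passage to the limit $|\varphi|\le M$) provides. Without that boundedness one could not choose a single $\delta$ working for all $x\in S$, so this hypothesis is the essential ingredient making the argument go through.
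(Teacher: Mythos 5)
Your proof is correct and complete: the passage of the bound $M$ to the limit function, the use of uniform continuity of $\psi$ on the compact ball $\bar{B}(0;M)$, and the combination with uniform convergence of $\varphi_n$ are exactly the ingredients needed. The paper itself states this lemma without proof, dismissing it as a well-known fact from advanced calculus, so there is no argument to compare against; yours is the standard one and fills that gap correctly.
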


The above lemmas are well known results from advanced calculus.\\

Finally, we have
\begin{theorem}
\label{asymptotic}
If $\Omega$ is compact, then for sufficiently small values of $\gamma\ge0$ the equation \eqref{E:PVI1} has a stationary solution in $\Omega$.
\end{theorem}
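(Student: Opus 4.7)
The plan is to produce a stationary solution as the uniform limit along a suitably chosen sequence of times of an orbit of \eqref{E:PVI1}. A stationary solution is precisely a fixed point of the time-independent operator
\[
(Ju)(x):=\int_{\Omega}w(x,y)\bigl[1+\gamma g(u(x)-u(y))\bigr]f(u(y))\,dy,\qquad u\in C(\Omega),
\]
so the task reduces to exhibiting $u_\infty\in C(\Omega)$ with $u_\infty=Ju_\infty$.

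First I would invoke Lemma \ref{L:Stationary}: for $\gamma$ small enough it yields a sequence $t_n\uparrow\infty$ and a limit $u_\infty\in C(\Omega)$ with $u^\gamma(\cdot,t_n)\to u_\infty$ uniformly on the compact set $\Omega$. The second auxiliary lemma (continuous composition preserves uniform convergence) then gives $g(u^\gamma(x,t_n)-u^\gamma(y,t_n))f(u^\gamma(y,t_n))\to g(u_\infty(x)-u_\infty(y))f(u_\infty(y))$ uniformly on $\Omega\times\Omega$; multiplying by the fixed kernel $w(x,y)$ and integrating over $\Omega$ (of finite measure) produces $(Ju^\gamma)(\cdot,t_n)\to Ju_\infty$ uniformly in $x$.

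The next step is to pass to the limit $n\to\infty$ in the variation-of-parameters identity, obtained from $u_t^\gamma+u^\gamma=Ju^\gamma$ via the substitution $\sigma=t_n-s$:
\[
u^\gamma(x,t_n)=e^{-t_n}u_0(x)+\int_0^{t_n}e^{-\sigma}(Ju^\gamma)(x,t_n-\sigma)\,d\sigma.
\]
The left side converges uniformly to $u_\infty(x)$ and the initial-datum term decays. The integrand on the right is dominated by the integrable function $e^{-\sigma}(1+\gamma)C_w$ (Lemma \ref{le:1}). Provided one can justify the pointwise convergence $(Ju^\gamma)(x,t_n-\sigma)\to(Ju_\infty)(x)$ for almost every $\sigma>0$, dominated convergence combined with the interchange-of-limits lemma delivers $u_\infty(x)=\int_0^\infty e^{-\sigma}(Ju_\infty)(x)\,d\sigma=(Ju_\infty)(x)$, the required fixed-point relation.

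The main obstacle is precisely this pointwise-in-$\sigma$ convergence, since Lemma \ref{L:Stationary} only supplies convergence at the sampled times $t_n$ themselves. To attack it I would enrich the compactness argument in the $t$-direction: the bound $|u_t^\gamma|\le C+(1+\gamma)C_w$ coming from the equation and Lemma \ref{le:1} makes the shifted family $\{(x,\tau)\mapsto u^\gamma(x,t_n+\tau)\}$ uniformly bounded and equicontinuous on $\Omega\times[-R,0]$ for every $R>0$. A diagonal Arzela--Ascoli extraction then yields a further subsequence along which $u^\gamma(\cdot,t_n+\tau)\to\tilde u(\cdot,\tau)$ uniformly on compacta with $\tilde u(\cdot,0)=u_\infty$, and Theorem \ref{dependence1} together with passage to the limit in the integral equation identifies $\tilde u$ as a solution of \eqref{E:PVI1}. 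The truly delicate step is then to force $\tilde u(\cdot,\tau)\equiv u_\infty$, i.e.\ to show that the constructed $\omega$-limit point is an equilibrium of the flow; this is where the real work lies. Should this invariance step resist a direct attack, an alternative route is to bypass the dynamical argument entirely and apply Schauder's fixed-point theorem to $J$ on the closed ball of radius $(1+\gamma)C_w$ in $C(\Omega)$: the Lipschitz estimate on $w$ makes $J$ compact, and smallness of $\gamma$ is what ensures that a fixed equicontinuous subset is invariant under $J$.
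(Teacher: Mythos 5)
Your strategy is essentially the paper's: extract $t_n\uparrow\infty$ with $u^\gamma(\cdot,t_n)\to u_\infty$ via Lemma \ref{L:Stationary}, upgrade this to $(Ju^\gamma)(\cdot,t_n)\to Ju_\infty$ via the composition lemma, and then try to pass to the limit in the evolution equation; the only difference is that you use the Duhamel identity where the paper differentiates at $t=t_n$. In both versions the same step is the crux, and you have correctly isolated it but not closed it: convergence of $u^\gamma(\cdot,t_n)$ alone gives no control over $u^\gamma(\cdot,t_n-\sigma)$ or $u^\gamma(\cdot,t_n+h)$, so neither the pointwise-in-$\sigma$ convergence needed for dominated convergence in your Duhamel formula, nor the invariance of the $\omega$-limit point under the flow, is available. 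Since your proposal explicitly leaves this ``truly delicate step'' unresolved, it does not reach the conclusion $u_\infty=Ju_\infty$, and as written it is an incomplete proof. (Be aware that the paper's own proof stops at exactly the same place: it asserts $\lim_{n\to\infty}\delta_{t_n}(h)=0$ uniformly, with $\delta_{t_n}(h)=h^{-1}\bigl(u^\gamma(\cdot,t_n+h)-u^\gamma(\cdot,t_n)\bigr)$, which is precisely the unproved claim that the time-shifted sequence also converges to $u_\infty$; the interchange-of-limits lemma is then invoked to get $\partial_t u^\gamma(\cdot,t_n)\to 0$.)

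Your fallback via Schauder is the right way to actually finish, and it should be the main argument rather than a one-sentence aside; it also genuinely departs from the paper's dynamical route. Concretely: assume $\gamma K C_w<1$, where $K$ is the Lipschitz constant of $g$, and take $M\ge (1+\gamma)K_w/(1-\gamma K C_w)$. The set $S_M=\{u\in C(\Omega):\ \|u\|_\infty\le(1+\gamma)C_w,\ |u(x)-u(\tilde x)|\le M|x-\tilde x|\ \forall x,\tilde x\}$ is convex, and compact in $C(\Omega)$ by Arzel\`a--Ascoli since $\Omega$ is compact. The bound $\|Ju\|_\infty\le(1+\gamma)C_w$ is Lemma \ref{le:1}, and the estimates $I_1\le K_w|x-\tilde x|$, $I_2\le\gamma K_w|x-\tilde x|$, $I_3\le\gamma K C_w|u(x)-u(\tilde x)|$ from Lemma \ref{L:Stationary} give $|Ju(x)-Ju(\tilde x)|\le\bigl((1+\gamma)K_w+\gamma K C_w M\bigr)|x-\tilde x|\le M|x-\tilde x|$, so $J(S_M)\subset S_M$; continuity of $J$ follows from the Lipschitz estimates in Theorem \ref{contraction}. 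Schauder then yields $u_\infty=Ju_\infty$ with no dynamics at all. Note your instinct to restrict to an equicontinuous set is not optional: the $I_3$ term transfers the modulus of continuity of $u$ onto $Ju$, so $J$ is not compact on the full ball of $C(\Omega)$, and this is exactly where the smallness of $\gamma$ enters.
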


\begin{proof}
Let $\{u^\gamma(\cdot,t_n)\}_{n\in\mathbb{N}}$ with limit $u_\infty$, as in lemma \ref{L:Stationary}. We have that $f(u^\gamma(\cdot,t_n))\to f(u_\infty(\cdot))$ uniformily in $\Omega$ because the previous lemma. Then 
\begin{equation}
\int_{\Omega}\omega_{g,\gamma}(\cdot,y)f(u^\gamma(y,t_n))dy\to\int_{\Omega}\omega_{g,\gamma}(\cdot,y)f(u_\infty(y))dy 
\end{equation}
uniformily in $\Omega$ because of $\int_\Omega|\omega(x,y)|dy\le C_\infty|\Omega|.$\\

As
\begin{equation}
 \partial_tu^\gamma(\cdot,t)=\lim_{h\to0}\frac{u^\gamma(\cdot,t+h)-u^\gamma(\cdot,t)}{h}
\end{equation}
exists, and
\begin{equation}
\lim_{n\to\infty}\delta_{t_n}(h)=0 \quad\text{(uniformily)}
\end{equation}
with
\begin{equation}
\delta_{t_n}(h):=\frac{u^\gamma(\cdot,t_n+h)-u^\gamma(\cdot,t_n)}{h}, 
\end{equation}
then we can take limit in equation \eqref{E:PVI1} and get
\begin{equation}
u_\infty(x) =\displaystyle\int_{\Omega}\omega_{g,\gamma}(x,y)f(u_\infty(y))dy,\quad\forall x\in\Omega.
\end{equation}
Therefore, $u_\infty$ is a stationary solution of the equation \eqref{E:PVI1}.
\end{proof}

\subsection{The result of learning: the gain field equation}\label{gainfield}

Now we develop a novel connection between learning kernels and gain field equations. In the stationary case, the kernel $\omega_{g,\gamma}$ reaches its final form 
\begin{equation}
\bar{\omega}_{g,\gamma}(x,y) = \omega (x,y)G(u_\infty(x), u_\infty(y)) = \omega (x,y) [1 - g( u_\infty(x) - u_\infty(y))].
\end{equation}
This kernel represents the distribution of synaptic weights \emph{after} the learning takes place. We can now plug this kernel $\bar{\omega}_{g,\gamma}$ into a new equation
\begin{equation}
u_t(x,t)+u(x,t)=\int_{\Omega}\omega(x,y)G(u_\infty(x), u_\infty(y))f(u(y,t))dy,
\label{eq:staticplasticity}
\end{equation}
and study the dynamics of the Amari neural field under the learned spatial patterns. Note that in this equation, the kernel does not change anymore in an activity dependent manner. The domain $\Omega$ can be bounded or unbounded. The learned kernel can be divided in \emph{pre-synaptic} and \emph{post-synaptic} contributions by using Mercer's theorem \cite{riesz1955b,sun2005mercer}, which states that positive semidefinite kernels on $\Omega$ can be decomposed as
\begin{equation}
\label{eq:mercer}
G(x,y) = \sum_i \sigma_i \phi_i(x) \phi_i(y)
\end{equation}
where $\phi_i$ and $\sigma_i$ are sequences of orthonormal eigenfunctions and the corresponding eigenvalues , $i = 1,\ldots,\infty$. This decomposition can be nicely interpreted as the presinaptic and postsynaptic effects of learning. It is known that, presynaptically, learning increases the probability of the neurotransmitter release, while postsynaptically it affects the density of receptors\cite{abbott2000synaptic}. Presynaptic effects can be translated in terms of an effectiv gain field. For that we have then the following estimate
\begin{align}
 \sum_i \sigma_i \phi_i(x) \phi_i(y) &\le  \left(\sum_i |\sigma_i \phi_i(x) \phi_i(y)|\right)^2\nonumber\\
&\le  \sum_i \sigma_i |\phi_i(x)|^2  \sum_i \sigma_i|\phi_i(y)|^2\nonumber\\
&= \left[\max_{x \in \Omega} G(x,x)\right]\sum_i \sigma_i|\phi_i(y)|^2.
\end{align}
The last term in the inequality 
\begin{equation}
\label{eq:pre}
\phi_{\mathrm{pre}}(y) = K_{\mathrm{pre}}\sum_i \sigma_i|\phi_i(y)|^2,
\end{equation}
is called \emph{pre-synaptic gain field}, for some constant $K_{\mathrm{pre}} > 0$. The pre-synaptic gain field equation is then defined as
\begin{equation}
u_t(x,t)+u(x,t)= \int_{\Omega}\omega(x,y) \phi_{\mathrm{pre}}(y)f(u(y,t))dy.
\label{eq:gainfield}
\end{equation}
It is easy to show that the particular condtions on the learning kernel (definition \ref{def:learning}) imply that it is positive semidefinite, that is $\int \int \phi(x) G(x,y) \phi (y) dx dy \ge 0$ for all $\phi \in L^2(\Omega)$. We state the following lemma without proof
\begin{lemma}
Let $u^\gamma$ and $u^p$ be solutions of  \eqref{E:Abbassian} and \eqref{eq:gainfield}, then, $u^p(x,t) >u^\gamma(x,t)$  for all $t > T$ for some $T > 0$.
\end{lemma}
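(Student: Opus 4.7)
The strategy is to establish a pointwise kernel comparison valid after the learning phase has essentially completed, and then invoke a comparison principle on the difference $\psi := u^p - u^\gamma$. First I would use Lemma \ref{L:Stationary} together with Theorem \ref{asymptotic} to fix a time $T_0 > 0$ such that, for $t > T_0$, $u^\gamma(\cdot, t)$ is within any prescribed tolerance of the stationary profile $u_\infty$ (at least along the subsequence produced by the Arzel\`a argument; see the obstacle discussion below). For such $t$, the plasticity kernel satisfies
\begin{equation*}
\omega(x,y)\,[1 + \gamma g(u^\gamma(x,t) - u^\gamma(y,t))] \approx \omega(x,y)\,G(u_\infty(x), u_\infty(y)),
\end{equation*}
and the chain of inequalities preceding \eqref{eq:pre}, together with Mercer's decomposition \eqref{eq:mercer} and the positivity of $G$, yields $G(u_\infty(x), u_\infty(y)) \le C\,\phi_{\mathrm{pre}}(y)$ for a constant $C$ depending on $\max_{x} G(x,x)$ and $K_{\mathrm{pre}}$. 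Choosing $K_{\mathrm{pre}}$ sufficiently large makes this inequality strict on a set of positive measure and of the form $\omega_{g,\gamma}(x,y) < \omega(x,y)\,\phi_{\mathrm{pre}}(y)$ for $t > T_0$.

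Next I would subtract the two Volterra formulations \eqref{variation} obtained for $u^p$ and $u^\gamma$, writing
\begin{equation*}
\psi(x,t) = e^{-(t-T_0)} \psi(x,T_0) + \int_{T_0}^t e^{s-t}\bigl[R(x,s) + E(x,s)\bigr]\,ds,
\end{equation*}
where $R(x,s) := \int_\Omega [\omega(x,y)\phi_{\mathrm{pre}}(y) - \omega_{g,\gamma}(x,y)] f(u^\gamma(y,s))\,dy$ is a nonnegative forcing term and
$E(x,s) := \int_\Omega \omega(x,y)\phi_{\mathrm{pre}}(y)[f(u^p(y,s)) - f(u^\gamma(y,s))]\,dy$ is an error term controlled by $L\,\|\phi_{\mathrm{pre}}\|_\infty C_w \|\psi(\cdot,s)\|_\infty$ via the Lipschitz bound on $f$. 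A standard Gr\"onwall argument applied to $\|\psi\|_\infty$ then gives a lower bound of the form $\psi(x,t) \ge \eta(t)$ with $\eta(t) > 0$ for all $t$ large enough, provided $R$ has a uniform-in-$x$ strictly positive lower bound. This last point is where the positivity of $\omega$ and the strict dominance in the kernel inequality combine: positivity of $f$ on the range of $u^\gamma$ (given by the global bound of Lemma \ref{global}) ensures the forcing integral is strictly positive pointwise.

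The two substantive difficulties I expect are the following. First, the strict inequality is delicate: it requires choosing $K_{\mathrm{pre}}$ large enough that the kernel gap is bounded below uniformly on $\Omega \times \Omega$, and it relies on $f$ being bounded away from zero on the relevant range (a property not explicitly guaranteed by Definition \ref{def:learning}); an additional mild hypothesis such as $f > 0$ on bounded intervals will likely be needed. Second, and more seriously, Lemma \ref{L:Stationary} and Theorem \ref{asymptotic} only provide convergence of $u^\gamma(\cdot, t_n)$ to $u_\infty$ along a subsequence, not as $t \to \infty$. To make the comparison uniform for all $t > T$ one would either need to upgrade the convergence (for instance by exploiting uniqueness of the stationary solution and some monotonicity/dissipation estimate) or reformulate the conclusion to hold along the same subsequence $\{t_n\}$; I would expect the final clean statement to read $u^p(x,t_n) > u^\gamma(x, t_n)$ for all sufficiently large $n$, which is the honest content of the argument sketched above.
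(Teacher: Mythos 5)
First, a point of reference: the paper offers no proof of this lemma at all --- it is explicitly ``stated without proof'' --- so there is nothing to compare your attempt against, and your sketch has to stand on its own. As a strategy it is the natural one (kernel dominance via the Mercer bound preceding \eqref{eq:pre}, followed by a comparison argument on $\psi := u^p - u^\gamma$), and you have correctly identified two of the real obstructions yourself: Lemma \ref{L:Stationary} and Theorem \ref{asymptotic} only give convergence of $u^\gamma(\cdot,t_n)$ along a subsequence, and Definition \ref{def:learning} only gives $0\le f\le 1$, so nothing prevents $f$ from vanishing on the range of $u^\gamma$ supplied by Lemma \ref{global}, which would destroy the strict positivity of your forcing term $R$.

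The genuine gap you have not resolved is the final Gr\"onwall step. From $\psi(x,t) = e^{-(t-T_0)}\psi(x,T_0) + \int_{T_0}^t e^{s-t}\bigl[R(x,s)+E(x,s)\bigr]\,ds$ with $R\ge r_0>0$ and $|E(x,s)|\le L\,\|\phi_{\mathrm{pre}}\|_\infty C_w\,\|\psi(\cdot,s)\|_\infty$, Gr\"onwall controls $\|\psi\|_\infty$ from above; it does not produce a positive lower bound on $\psi$ unless you already know $L\,\|\phi_{\mathrm{pre}}\|_\infty C_w \sup_s\|\psi(\cdot,s)\|_\infty < r_0$, and here $\|\psi\|_\infty$ is of order one (the two solutions have unrelated initial data and need not be close, and $\psi(\cdot,T_0)$ may be negative somewhere). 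The standard way to close such an argument is a monotone comparison principle, which needs $\omega\ge0$, $f$ non-decreasing, and an ordering of the data at time $T_0$; none of these is available from the paper's hypotheses (the synaptic kernel is only assumed bounded and Lipschitz, $f$ is not assumed monotone, and there is no reason $u^p(\cdot,T_0)\ge u^\gamma(\cdot,T_0)$). In addition, the kernel-dominance step leans on the paper's chain of inequalities preceding \eqref{eq:pre}, whose first step, $a\le(\sum_i|a_i|)^2$, is false when the sum lies in $(0,1)$, and on identifying $G$ with $1+\gamma g$ while the paper writes $1-g$. So the honest conclusion is that the lemma as stated is not provable from the stated hypotheses; even your weakened version along the subsequence $\{t_n\}$ requires the extra assumptions $\omega>0$, $f>0$ and non-decreasing, and $K_{\mathrm{pre}}\ge\max_x G(x,x)$ to go through.
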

We now concentrate in a particular form of \eqref{eq:pre} and synaptic kernels that allows us to introduce a new methodology to analyze gain fields emerging from learning processes. For $K_{\mathrm{pre}} = 1/\lambda$,
\begin{equation}
\phi_{\mathrm{pre}}(x)  =  \frac{1}{\lambda} (k^2 - V(x)),
\label{eq:staticplasticity3}
\end{equation}
This function represents a spatial gain distribution relative to a background or base gain $k^2$. The synaptic kernel takes the form of an exponentially decaying function and it is assumed spatially homogeneous, possitive (effectively excitatory) and isotropic
\begin{equation}
w(x,y) = w(|x-y|) =\frac{1}{2}e^{-\lambda|x-y|}, \;\; \lambda >0.\label{eq:weight}
\end{equation}
For linear firing rate mappings $f(u(x)) = u(x)$ and under the previous assumptions, equation \eqref{E:Amari} becomes
\begin{equation}
u_t(x,t)=-u(x,t)+\frac{1}{2\lambda}\int_{\mathbb{R}^m}e^{-\lambda|x-y|}\phi_{\mathrm{pre}}(y)u(y,t)dy,
\label{eq:amarigain}
\end{equation}

This model has been studied before in the literature for the case that we call the \textit{free network} $V=0,$ applied for problems of non-local diffusion and in population dynamics. This model can also be related to the time independent Schr\"odinger equation, a relation which allows us to stydy presynaptic gain fields in analogy to \emph{quantum wells}. First, suppose that the solutions to equation (\ref{eq:amarigain}) approach an stationary state $u(x)$, given by
\begin{equation}
	u(x)=\frac{1}{2\lambda}\int_{\mathbb{R}}e^{-\lambda|x-y|}P(y)u(y)dy,
	\label{eq:amaristeady}
\end{equation}
which can be writen as
\begin{equation}
u(x)=w\ast[{f\circ{u(\cdot,t)}}](x), \label{eq:conv}
\end{equation}
where $\ast$ denotes spatial convolution and $\circ$ the function composition. Then, we can apply the Fourier Transform (FT) with respect to the spatial variable to obtain
\begin{equation}
\hat{u}=\widehat{w \ast f\circ{u}},\label{eq:ftamari}
\end{equation}
and using the fact that $\widehat{w\ast f} = \hat{w}\hat{f}$ we get
\begin{equation*}
\hat{u}(\xi)=\frac{1}{\xi^{2}+\lambda^{2}}\widehat{f\circ{u}}(\xi).
\end{equation*}
Multiplying by $\hat{w}$ on both sides and taking the inverse FT and using (\ref{eq:staticplasticity3}) we can write
\begin{equation}
\lambda^{2}{u}(x)-u_{xx}(x)= [k^2 - V(X)]u(x),\label{e10}
\end{equation}
that is
\begin{equation} 
-\frac{d ^2 u}{dx^2}(x)+V(x)u(x)=Eu(x),\label{eq:schrodinger}
\end{equation}
with $E = k^2 - \lambda^2$, which is the time-independent Schr\"{o}dinger equation.\\

We believe this relationship allow us to frame three questions as inquiries about the eigenfunction $u(x)$:
\begin{enumerate}
\item Under which conditions can a focus of asynchronous activity be propagated from a region of high gain to a region of low gain?
\item Under which conditions of the gain field can regions of high (up states) and low (down states) coexist in the same spatial domain?
\item Under which conditions can this network support bumps?
\end{enumerate}
We answer those questions in \cite{Jimenez2017}. In particular, we show that stable solutions are posible in the presence of gain fields even if the neural field has completely excitatory synaptic kernels and unbounded firing rate mappings.\\
\subsection*{Remark}
Note that if $V$ is a square well potential with height $k^2$, then, the integral in  \eqref{eq:amaristeady}is over a compact interval, therefore,  therefore, the existence of $u$ is well justified.\\

\section{Conclusions}
\label{conclusions}
We have shown that the model proposed by Abbassian et al. is well defined for general choices of the function $f$, $w$ and $g$. Moreover, our results guarantee the existence of solutions in general functional spaces and the existence of the stationary state, which put the conclusions reached in that paper in a solid analytic ground for further development. \\
In the case in which $f$ is a heaviside function, the IVP turns out to be ill posed [Cordero y Pinilla (to be published)]. This is because the equation's flow is discontinous and, therefore, a fixed point argument can not be used in $X_\rho$. In that case, one expects to proof the existence of weaker solutions by using compactness arguments as in \cite{potthast2010existence}. Other aspects like the speed, and decay of traveling waves can also be studied in this, more general, case. \\
The final form of the kernel is important in that it represent the final structure of the synaptic weight after learning. We have shown that this structure can be decomposed in pre-synaptic and post-synaptic distributions, of which we have focused in the pre-synaptic ones. We derive a form of the time independent Schr\"odinger equation that can be used to study particular distributions of gains. This methodology is further developed in future work.

\section{Acknowledgements}
\label{acknowledgements}
This project, with title  ``An\'alisis te\'orico - Experimental de un modelo de campo Neural usando t\'ecnicas de la mec\'anica cu\'antica", was funded by the ``Convocatoria del programa nacional de proyectos para el fortalecimiento de la investigaci\'on, la creaci\'on y la innovaci\'on de posgrados de la universidad Nacional de Colombia 2013 - 2015", code number 19375, of the Universidad Nacional de Colombia, sede Manizales.




\bibliographystyle{elsarticle-num} 
\bibliography{article}
\end{document}